\newtheorem{thm}{Theorem}
\newtheorem{theorem}{Theorem}[section]
\newtheorem{lemma}[theorem]{Lemma}
\newtheorem{cor}[theorem]{Corollary}
\newtheorem{remark}[theorem]{Remark}
\theoremstyle{definition}
\theoremstyle{remark}
\DeclareMathOperator{\cp}{cp}
\DeclareMathOperator{\scp}{scp}
\newcommand\kk{k}
\title{\bf Pairwise Balanced Designs \\ and \\ Sigma Clique Partitions}
\author{Akbar Davoodi \qquad Ramin Javadi \qquad Behnaz Omoomi\\[4pt]
\small Department of Mathematical Sciences\\[-0.8ex]
\small Isfahan University of Technology\\[-0.8ex] 
\small 84156-83111, Isfahan, Iran\\
}
\date{}
\begin{document}

\maketitle

\begin{abstract}
In this paper, we are interested in minimizing the  sum of block sizes in a pairwise balanced design, where there are some constraints on the size of one block or the size of the largest block. For every positive integers $n,m$, where $m\leq n$, let $\mathscr{S}(n,m)$ be the smallest integer $s$ for which there exists a PBD on $n$ points whose largest block has size $m$ and  the sum of its block sizes is equal to $s$. Also, let  $\mathscr{S}'(n,m)$ be the smallest integer $s$ for which there exists a PBD on $n$ points which has a block of size $m$ and the  sum of it block sizes is equal to $s$. 
We prove some lower bounds for $\mathscr{S}(n,m)$ and $\mathscr{S}'(n,m)$. Moreover,   we apply these bounds to determine the asymptotic behaviour of the sigma clique partition number of the graph $K_n-K_m$, Cocktail party graphs and complement of paths and cycles.

  \bigskip\noindent \textbf{Keywords:} Clique partition;  Pairwise balanced design; Sigma clique partition number  \\[.2cm]
\end{abstract}
\section{Introduction}
An $(n,k,\lambda)-$\textit{design} (or $(n,k,\lambda)$-BIBD) is a pair $(P, \mathcal{B})$ where $P$ is a  finite set of $n$ \textit{points} and $\mathcal{B}$ is a collection of $k-$subsets of $P$, called  \textit {blocks},  such that every two distinct points in $P$  is contained in exactly $\lambda$ blocks. 
In case $|P|={|\cal B|}$, it is called a \textit{symmetric design}. For positive integer $ q $,  
a $(q^2+q+1, q+1, 1)$-BIBD and a $(q^2,q,1)$-BIBD are called a \textit{projective plane} and an \textit{affine plane} of order $q$, respectively.
A design is called \textit{resolvable}, if there exists a  partition  of the set of blocks
$\mathcal{B}$ into \textit{parallel classes}, each of which is a partition of $P$.

A \textit{pairwise balanced design} (PBD) is a pair $(P,\mathcal{B})$, where $P$ is a finite set of $n$ points and $\mathcal{B}$ is  a family of subsets of $P$, called \textit {blocks}, such that every two distinct points in $P$, appear in exactly one block. A \textit{nontrivial} PBD is a PBD where $P\not \in \mathcal{B}$. A PBD $(P,\mathcal{B})$ on $n$ points with one block of size $n-1$ and the others of size two is called \textit{near-pencil}.

The problem of determining the minimum number of blocks in a pairwise balanced design when the size of its largest block is specified or the size of a particular block is specified, has been the subject of many researches in recent decades.
The most important and well-known result about this problem is due to de Bruijn and Erd\H{o}s~\cite{deBruijn48} which states that every nontrivial PBD on $n$ points has at least $n$ blocks and the only nontrivial PBDs on $n$ points with exactly $n$ blocks  are near-pencil and projective plane.
For every positive integers $n,m$, where $m\leq n$, let $\mathscr{G}(n,m)$ be the minimum number of blocks in a PBD on $n$ points whose largest block has size $m$. Also, let  $\mathscr{G}'(n,m)$ be the minimum number of blocks in a PBD on $n$ points which has a block of size $m$.
A classical result known as Stanton-Kalbfleisch Bound \cite{stanton70} states that $\mathscr{G}'(n,m)\geq 1+(m^2(n-m))/(n-1)$ and equality holds if and only if there exists a resolvable $(n-m,(n-1)/m,1)$- BIBD. Also, a corollary of Stanton-Kalbfleisch is that $\mathscr{G}(n,m)\geq \max\{n(n-1)/m(m-1), 1+(m^2(n-m))/(n-1)\}$. For a survey on these and more bounds, see \cite{Rees90,stanton97}.

In this paper, we are interested in minimizing the  sum of block sizes in a PBD, where there are some constraints on the size of one block or the size of the largest block. For every positive integers $n,m$, where $m\leq n$, let $\mathscr{S}(n,m)$ be the smallest integer $s$ for which there exists a PBD on $n$ points whose largest block has size $m$ and  the sum of its block sizes is equal to $s$. Also, let  $\mathscr{S}'(n,m)$ be the smallest integer $s$ for which there exists a PBD on $n$ points which has a block of size $m$ and the  sum of it block sizes is equal to $s$. 
 In Section~\ref{sec:pbd}, we prove some lower bounds for $\mathscr{S}(n,m)$ and $\mathscr{S}'(n,m)$. In particular, we show that $\mathscr{S}(n,m)\geq 3n-3$, for every $m$, $2\leq m\leq n-1$. Also, we prove that, for every $2\leq m\leq n $,
 \[\mathscr{S}'(n,m)\geq \max\left\{(n+1)m-\frac{m^2(m-1)}{n-1}, m+\frac{(n-m)(n-5m-1)}{2}\right\},\]
 where equality holds for $ m\geq n/2 $. Furthermore, we prove that if $ n\geq 10 $ and $2\leq m\leq n-\frac{1}{2} ( \sqrt{n} +1)$, then
 $\mathscr{S}(n,m)\geq n(\lfloor\sqrt{n}\rfloor+1)-1$.

The connection of pairwise balanced designs and clique partition of graphs is already known in the literature. Given a simple graph $ G $, by a \textit{clique} in $G$ we mean a subset of mutually adjacent vertices. A \textit{clique partition } $\mathcal{C}$ of $G$ is a family of cliques in $G$ such that the endpoints of every edge of $G$ lie in exactly one member  of $\mathcal{C}$. The minimum size of a clique partition of $G$ is called the  \textit{clique partition number} of $G$ and is denoted by $\cp(G)$.  

For every graph $G$ with $ n $ vertices, the union of a clique partition of $G$ and a clique partition of its complement, $\overline{G}$, form a PBD on $n$ points. This connection has been deployed to estimate $\cp(G)$, when $G$ is some special graph such as $K_n-K_m$~\cite{WallisAsymp,erdos88,pullman82,Rees}, Cocktail party graphs and complement of paths and cycles \cite{Wallis82,Wallis84,Wallis87}. 

Our motivation for study of the above mentioned problem is a weighted version of clique partition number. The \textit{sigma clique partition number} of a graph $G$, denoted by $ \scp(G) $, is defined as the smallest integer $s$ for which there exists a clique partition of $G$ where  the sum of  the sizes of its cliques is equal to $s$. It is shown that for every graph $G$ on $n$ vertices, $\scp(G)\leq \lfloor n^2/2\rfloor$, in which equality holds if and only if $ G $ is the complete bipartite graph $K_{\lfloor n/2\rfloor, \lceil n/2\rceil } $  \cite{chung81,kahn81,gyori80}.

Given a clique partition $\mathcal{C}$  of a graph $G$, for every vertex $x\in V(G)$, the \textit{valency} of $x$ (with respect to $\mathcal{C}$), denoted by $v_\mathcal{C}(x)$, is defined to be the number of cliques in $\mathcal{C}$ containing $x$. In fact,
\[\scp(G)=\min_{\mathcal{C}} \sum_{C\in \mathcal{C}}|C|= \min_{\mathcal{C}} \sum_{x\in V(G)} v_{\mathcal{C}}(x),\]
where the minimum is taken over all possible clique partitions of $G$.

 In Section~\ref{sec:Kn-Km}, we apply the results of Section~\ref{sec:pbd} to determine the asymptotic behaviour of the sigma clique partition number of the graph $K_n-K_m$. In fact, we prove that if $m\leq {\sqrt{n}}/{2}$, then $\scp(K_n-K_m)\sim (2m-1)n$, if ${\sqrt{n}}/{2}\leq m\leq \sqrt{n}$, then $\scp(K_n-K_m)\sim n\sqrt{n}$ and if $m\geq \sqrt{n}$ and $m=o(n)$, then $\scp(K_n-K_m)\sim mn$. Also, if $G$ is Cocktail party graph, complement of path or cycle on $n$ vertices, then we prove that $\scp(G)\sim n\sqrt{n}$.
%

\section{Pairwise balanced designs}\label{sec:pbd}

A celebrated result of de Bruijn and Erd\H{o}s states that for every nontrivial PBD  $(P,\mathcal{B})$, we have $|\mathcal{B}|\geq |P|$ and equality holds if and only if  $(P,\mathcal{B})$ is near-pencil or projective plane~\cite{deBruijn48}. In this section, we are going to answer the question that what is the minimum sum of block sizes in a PBD.

 The following theorem can be viewed as a de Bruijn-Erd\H{o}s-type bound, which shows that $\mathscr{S}(n,m)\geq 3n-3$, for every $m$, $2\leq m\leq n-1$.

\begin{theorem}\label{thm:PBDsigma}
Let $(P,\mathcal{B})$ be a nontrivial PBD with $n$ points, then we have
\begin{equation}\label{eq:pbd}
\sum_{B\in\mathcal{B}} |B| \geq 3n-3,
\end{equation}
and equality holds if and only if $(P,\mathcal{B})$ is near-pencil.
\end{theorem}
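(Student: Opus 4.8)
The plan is to recast the statement in terms of \emph{replication numbers}. For $p\in P$ let $r_p$ denote the number of blocks containing $p$; counting incidences in two ways gives $\sum_{B\in\mathcal B}|B|=\sum_{p\in P}r_p$, so it suffices to show $\sum_p r_p\ge 3n-3$. I will use two elementary facts, neither of which needs the de Bruijn--Erd\H{o}s theorem itself (only the lemma behind it). First, in a nontrivial PBD every point lies in at least two blocks: if $r_p=1$, the unique block through $p$ must contain every other point in order to cover the pairs at $p$, so that block is $P$, contradicting nontriviality. Second, if $p\notin B$ then $r_p\ge|B|$, because the blocks joining $p$ to the individual points of $B$ are pairwise distinct. (Throughout we assume every block has size at least $2$; blocks of size $\le 1$ cover no pair and may be deleted, which only decreases $\sum_B|B|$.)

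The argument then splits on whether some point has valency exactly $2$. If $r_p\ge 3$ for all $p$, then $\sum_p r_p\ge 3n>3n-3$ and we are done with room to spare. Otherwise fix $p_0$ with $r_{p_0}=2$, say $p_0\in B_1$ and $p_0\in B_2$ with $B_1\ne B_2$. Covering the pairs $\{p_0,q\}$ over all $q\ne p_0$ forces $B_1\cup B_2=P$, and uniqueness of the block through a pair forces $B_1\cap B_2=\{p_0\}$; writing $a=|B_1|$, $b=|B_2|$ we get $a+b=n+1$ with $2\le a,b\le n-1$. By the second fact each of the $a-1$ points of $B_1\setminus\{p_0\}$ has valency at least $b$, and each of the $b-1$ points of $B_2\setminus\{p_0\}$ has valency at least $a$. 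Since these points together with $p_0$ exhaust $P$,
\[
\sum_{p\in P}r_p\ \ge\ 2+(a-1)b+(b-1)a\ =\ 2ab-(a+b)+2\ =\ 2ab-n+1 .
\]
Now $ab=a(n+1-a)$ is a concave function of $a$ on $[2,n-1]$, hence is minimized at the endpoints, where it equals $2(n-1)$; thus $ab\ge 2(n-1)$ and $\sum_p r_p\ge 4(n-1)-n+1=3n-3$, proving \eqref{eq:pbd}.

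For equality, assume $\sum_B|B|=3n-3$. The first branch above is impossible, so some $p_0$ has $r_{p_0}=2$ and every inequality in the displayed chain is tight; in particular $ab=2(n-1)$ with $a+b=n+1$, which forces $\{a,b\}=\{2,n-1\}$. Hence $(P,\mathcal B)$ has a block $B$ of size $n-1$. It then remains to show that a nontrivial PBD on $n$ points with a block $B$ of size $n-1$ is a near-pencil: if $p_0$ is the unique point outside $B$, then a block through $p_0$ meets $B$ in at most one point (two common points would lie in two blocks), so it is a pair $\{p_0,q\}$; and a block avoiding $p_0$ is contained in $B$, hence equals $B$. Thus $\mathcal B$ consists of $B$ together with the $n-1$ pairs $\{p_0,q\}$, $q\in B$, a near-pencil, and its block sizes sum to $(n-1)+2(n-1)=3n-3$, which also gives the converse. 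The point that needs care is the completeness and disjointness of the case split --- especially that the ``all valencies $\ge 3$'' configuration cannot give equality --- together with the small cases: $n=2$ admits no nontrivial PBD, and for $n=3$ the extremal sizes $\{2,n-1\}$ coincide with $\{2,2\}$, both of which are checked directly.
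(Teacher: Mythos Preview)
Your proof is correct and takes a genuinely different route from the paper's. The paper argues by \emph{induction on $n$}: if there is a block of size $n-1$ the PBD is a near-pencil; otherwise one locates a point $x$ with $r_x\ge 3$, deletes it, and applies the inductive hypothesis to get $\sum_B|B|=r_x+\sum_{B'}|B'|\ge 3+3(n-2)$. The equality case then requires an extra argument showing that all valencies lie in $\{2,3\}$ with exactly three valency-$2$ points, and deriving a contradiction from that. Your argument is instead a direct, non-inductive case split: either every $r_p\ge 3$ (giving $\sum r_p\ge 3n$), or some $p_0$ has $r_{p_0}=2$, in which case the two blocks through $p_0$ partition $P$ with sizes $a+b=n+1$, and the ``$r_p\ge |B|$ for $p\notin B$'' lemma yields $\sum r_p\ge 2ab-n+1$; the concavity of $a\mapsto a(n+1-a)$ then pins the minimum at $\{a,b\}=\{2,n-1\}$, giving $3n-3$ and simultaneously identifying the extremal configuration. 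Both proofs rest on the same two elementary facts, but yours trades the induction for a small optimization and gets the equality characterization essentially for free, whereas the paper's version is slightly shorter for the bare inequality but needs a separate counting argument to handle equality.
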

\begin{proof}
We use induction on the number of points. Let $(P,\mathcal{B})$ be a nontrivial PBD with $n$ points. Inequality~\eqref{eq:pbd} clearly holds when $n=3$. So assume that $n\geq 4$ and for every $x\in P$, let $r_x$ be the number of blocks containing $x$. First note that for every block $B\in\mathcal{B}$ and every $x\in P\setminus B$, we have $r_x\geq |B|$.  

If there is a block $B_0\in \mathcal{B}$ of size $n-1$ and $x_0$ is the unique point in 
$P\setminus B_0$, then for every $x\in B_0$, $x$ and $x_0$ appear within a block of size two. Therefore,  $(P,\mathcal{B})$ is near-pencil and $\sum_{B\in\mathcal{B}}|B|=(n-1)+2(n-1)=3n-3$. 

Otherwise,  all blocks are of size at most $n-2$. First we prove that there exists some point $x\in P$ with $r_x\geq 3$. Since there is no block of size $n$, $r_x\geq 2$ for all $x\in P$. Now for some $y\in P$, assume that $B_1, B_2$ are the only two blocks containing $y$. Since $n\geq 4$, the size of at least one of these blocks, say $B_1$, is greater than two. Let $x\neq y$ be an element of $B_2$. Then,  $r_x\geq |B_1|\geq 3$. 
Hence, there exists some point $x\in P$ which appears in at least three blocks.

Now, remove $x$ from all blocks to obtain the nontrivial PBD $(P',\mathcal{B'})$, where $P'=P\setminus \{x\}$ and $\mathcal{B'}=\{B\setminus\{x\}\ :\  B\in\mathcal{B}\}$. Therefore, 
\begin{equation}\label{eq:case2}
\sum_{B\in \mathcal{B}}|B| = r_x+\sum_{B'\in\mathcal{B'}}|B'|\geq 3+3(n-2),
\end{equation}
where the last inequality follows from the induction hypothesis.

Now, assume that for a  PBD $(P,\mathcal{B})$ equality holds in \eqref{eq:pbd}. If $(P,\mathcal{B})$ is not a near-pencil, then equality holds in \eqref{eq:case2} as well and thus  we have $2\leq r_x\leq 3$, for every $x\in P$. On the other hand, $\sum_{B\in\mathcal{B}}|B|=\sum_{x\in P} r_x=3n-3$. Therefore, there are exactly $3$ points, say $x,y,z$, each of which appears in exactly two blocks and each of the other points appears in exactly three blocks. Also, let $B_1, B_2$ be the only two blocks containing $y$ and assume that $x\in B_1$. Therefore, $2=r_x\geq |B_2|$ and then $|B_1|=n-1$, which is a contradiction.
\end{proof}
Since  the union of every clique partition of $G$ and $\overline{G}$ forms a clique partition for $K_n$ which is equivalent to a PBD on $n$ points, the following corollaries are straightforward.
\begin{cor}
Let $\mathcal{C}$ be a clique partition of $K_n$ whose cliques are of size at most $n-1$. Then,  $\sum_{C\in \mathcal{C}} |C|\geq 3n-3$.
\end{cor}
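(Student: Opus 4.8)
The plan is to read this off directly from Theorem~\ref{thm:PBDsigma} via the classical dictionary between clique partitions of the complete graph and pairwise balanced designs. Given the clique partition $\mathcal{C}$ of $K_n$, the first step is to discard every clique of size $1$; this can only decrease $\sum_{C\in\mathcal{C}}|C|$, so it suffices to prove the bound after this reduction. Let $\mathcal{B}$ be the resulting family, regarded as a family of subsets of $P:=V(K_n)$, $|P|=n$. Since $\mathcal{C}$ was a clique partition of $K_n$, every edge of $K_n$ — equivalently, every pair of distinct points of $P$ — lies in exactly one member of $\mathcal{C}$; removing singletons loses no edges, so every pair of distinct points lies in exactly one member of $\mathcal{B}$. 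Hence $(P,\mathcal{B})$ is a PBD on $n$ points.

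The second step is to observe that this PBD is nontrivial. By hypothesis every clique in $\mathcal{C}$, and therefore every block in $\mathcal{B}$, has size at most $n-1<n=|P|$, so $P\notin\mathcal{B}$ and $(P,\mathcal{B})$ is nontrivial. (For $n\le 2$ there is no clique partition of $K_n$ satisfying the size restriction, so the statement is vacuous; for $n=3$ it holds directly, in agreement with the base case of Theorem~\ref{thm:PBDsigma}.)

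Finally, applying Theorem~\ref{thm:PBDsigma} to the nontrivial PBD $(P,\mathcal{B})$ gives
\[
\sum_{C\in\mathcal{C}}|C|\ \ge\ \sum_{B\in\mathcal{B}}|B|\ \ge\ 3n-3,
\]
which is the claimed inequality. I do not expect any real obstacle here; the only points needing a word of care are the removal of singleton cliques — so that a clique partition of $K_n$ genuinely yields a PBD and not merely a cover of the pairs — and the one-line verification that the size constraint forces $P\notin\mathcal{B}$, which is exactly what is needed to invoke Theorem~\ref{thm:PBDsigma}. If desired, the equality case can be appended verbatim from that theorem: $\sum_{C\in\mathcal{C}}|C|=3n-3$ precisely when the associated PBD is a near-pencil.
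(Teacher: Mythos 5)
Your proposal is correct and follows exactly the paper's intended route: treat the clique partition (after discarding singletons) as a nontrivial PBD on $n$ points and invoke Theorem~\ref{thm:PBDsigma}. The paper states this as immediate; your only addition is the careful remark about removing size-one cliques, which is a fair point of hygiene but does not change the argument.
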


\begin{cor}
For every graph $G$ on $n$ vertices except the empty and  complete graph, we have 
\[\scp(G)+\scp(\overline{G})\geq 3n-3,\]
and equality holds if and only if $G$ or $\overline{G}$ contains a clique of size $n-1$.
\end{cor}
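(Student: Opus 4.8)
The plan is to reduce the statement to Theorem~\ref{thm:PBDsigma} via the standard correspondence between clique partitions of $G$, of $\overline{G}$, and of $K_n$. First I would fix optimal clique partitions $\mathcal{C}_1$ of $G$ and $\mathcal{C}_2$ of $\overline{G}$, so that $\sum_{C\in\mathcal{C}_1}|C|=\scp(G)$ and $\sum_{C\in\mathcal{C}_2}|C|=\scp(\overline{G})$; since $G$ is neither empty nor complete, I may assume every member of $\mathcal{C}_1\cup\mathcal{C}_2$ has size at least $2$ (discarding singletons only decreases the sums). Then $\mathcal{C}_1\cup\mathcal{C}_2$ is a clique partition of $K_n$, equivalently a PBD $(P,\mathcal{B})$ on $n$ points with $P=V(G)$. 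The crucial point is that this PBD is nontrivial: a block of size $n$ would be a clique of size $n$ in $G$ or in $\overline{G}$, forcing $G=K_n$ or $G$ empty, both excluded. Applying Theorem~\ref{thm:PBDsigma} then yields $\scp(G)+\scp(\overline{G})=\sum_{B\in\mathcal{B}}|B|\ge 3n-3$, which is the desired inequality.

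For the characterization of equality, the ``only if'' direction is immediate: if $\scp(G)+\scp(\overline{G})=3n-3$, then the PBD $(P,\mathcal{B})$ above attains equality in \eqref{eq:pbd}, hence is a near-pencil by Theorem~\ref{thm:PBDsigma}; its unique block of size $n-1$ lies in $\mathcal{C}_1$ or in $\mathcal{C}_2$, i.e.\ it is a clique of size $n-1$ in $G$ or in $\overline{G}$.

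For the ``if'' direction, suppose without loss of generality that $G$ has a clique on $V(G)\setminus\{v\}$ for some vertex $v$. Then I would exhibit clique partitions of $G$ and of $\overline{G}$ whose sizes add up to exactly $3n-3$: in $G$ take the clique $V(G)\setminus\{v\}$ together with the edges $\{v,u\}$ for $u\in N_G(v)$, and in $\overline{G}$ take the edges $\{v,w\}$ for $w\in N_{\overline{G}}(v)$. These are valid clique partitions --- the first because $N_G(v)\subseteq V(G)\setminus\{v\}$ is already covered by the big clique so only the star at $v$ remains, and the second because $V(G)\setminus\{v\}$ is independent in $\overline{G}$ so every edge of $\overline{G}$ is incident to $v$. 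Their combined size is $(n-1)+2|N_G(v)|+2|N_{\overline{G}}(v)|=(n-1)+2(n-1)=3n-3$, hence $\scp(G)+\scp(\overline{G})\le 3n-3$, and with the lower bound equality follows. (In fact the union of these two partitions is exactly the near-pencil on $n$ points.)

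The whole argument is essentially bookkeeping once Theorem~\ref{thm:PBDsigma} is available; the only place calling for a little care is verifying that $\mathcal{C}_1\cup\mathcal{C}_2$ genuinely is a nontrivial PBD --- that no block of size $n$ appears and that size-$1$ cliques may be dropped --- which is precisely where the hypotheses that $G$ is neither empty nor complete are used. I do not expect any genuine obstacle beyond this point.
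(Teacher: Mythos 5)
Your proof is correct and takes exactly the route the paper intends: the paper derives this corollary directly from Theorem~\ref{thm:PBDsigma} by noting that the union of clique partitions of $G$ and $\overline{G}$ forms a PBD on $n$ points, leaving the details (nontriviality, dropping singletons, and the equality characterization) as straightforward. Your filling-in of those details, including the explicit near-pencil construction for the ``if'' direction of the equality case, is accurate.
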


In the same vein, one can prove the following theorem which states a lower bound on the maximum number of appearance of the points in a PBD.

\begin{theorem}
Let $(P,\mathcal{B})$ be a nontrivial PBD with $n$ points, and for every $x\in P$, let $r_x$ be the number of blocks containing $x$. Then,  we have
\begin{equation}\label{eq:pbd2}
\max_{x\in P}{r_x}\geq \frac{1+\sqrt{4n-3}}{2},
\end{equation}
and equality holds if and only if $(P,\mathcal{B})$ is a projective plane or near-pencil.
\end{theorem}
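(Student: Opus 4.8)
The plan is to run essentially the same local counting argument used for Theorem~\ref{thm:PBDsigma}, exploiting two facts already established there: the blocks through a fixed point partition the other $n-1$ points, and $r_x\ge |B|$ whenever $x\notin B$. Write $r=\max_{x\in P}r_x$. Since $r$ is a positive integer and $\tfrac{1+\sqrt{4n-3}}{2}$ is the larger root of $t^2-t-(n-1)$, inequality~\eqref{eq:pbd2} is equivalent to $r(r-1)\ge n-1$, and equality in~\eqref{eq:pbd2} is equivalent to $r(r-1)=n-1$ (which already forces $n=r^2-r+1$). So it suffices to work with this quadratic reformulation.

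For the inequality I would fix an arbitrary point $x_0$ and list the blocks through it as $B_1,\dots,B_{r_{x_0}}$, chosen so that $B_1$ has the largest size among them. Nontriviality gives $r_{x_0}\ge 2$ (a single block through $x_0$ would have to be all of $P$), and the partition property gives $\sum_{i=1}^{r_{x_0}}(|B_i|-1)=n-1$. Picking any $y\in B_2\setminus\{x_0\}$ (possible since $r_{x_0}\ge 2$), and noting that two blocks meet in at most one point while $x_0\in B_1\cap B_2$, we get $y\notin B_1$, hence $|B_1|\le r_y\le r$. Therefore
\[
n-1=\sum_{i=1}^{r_{x_0}}(|B_i|-1)\le r_{x_0}\,(|B_1|-1)\le r(r-1),
\]
which is the desired bound.

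For the equality case I would assume $r(r-1)=n-1$. Then the displayed chain, having been derived from an arbitrary $x_0$, must be an equality for \emph{every} $x_0$. The first inequality forces all blocks through $x_0$ to have a common size; and since $|B_1|-1\le r-1$, if $r_{x_0}<r$ we would get $r_{x_0}(|B_1|-1)\le(r-1)^2<r(r-1)$, a contradiction, so $r_{x_0}=r$ and hence $|B_1|=r$. As $x_0$ was arbitrary, every point lies in exactly $r$ blocks and every block has size $r$; a double count of incidences then yields exactly $n$ blocks, so $(P,\mathcal{B})$ is a symmetric $(n,r,1)$-BIBD with $n=(r-1)^2+(r-1)+1$, i.e.\ a projective plane of order $r-1$ (the degenerate case $r=2$, $n=3$ being precisely the near-pencil on three points). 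Conversely, a projective plane of order $q$ has $n=q^2+q+1$ points with every point on $q+1$ lines, while the near-pencil on $3$ points has every point in $2$ blocks; in both cases $\max_{x}r_x=\tfrac{1+\sqrt{4n-3}}{2}$, so these configurations attain equality.

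The main inequality is routine once the tools from the proof of Theorem~\ref{thm:PBDsigma} are in place. The one point requiring care is the equality analysis: one must notice that the inequality was proved starting from an \emph{arbitrary} base point, so tightness propagates to all points simultaneously, and then read off from the integer identity $r_{x_0}(|B_1|-1)=r(r-1)$ that both the (now constant) block size and every replication number equal $r$; after that, recognising the resulting object as a projective plane is immediate from the definitions, with the sole degenerate instance $n=3$ accounting for the near-pencil alternative in the statement.
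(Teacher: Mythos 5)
Your proof is correct and follows essentially the same route as the paper: partition $P\setminus\{x_0\}$ by the blocks through $x_0$ to get $n-1\le r_{x_0}(|B_1|-1)$, then bound $|B_1|$ by the valency of a point outside $B_1$ to obtain $r(r-1)\ge n-1$, which is the quadratic form of \eqref{eq:pbd2}. Your equality analysis (forcing constant block size and constant replication $r$, hence a symmetric $(n,r,1)$-design) matches the paper's, and is in fact slightly more complete since you also verify the converse and pin down the near-pencil case as $n=3$.
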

\begin{proof}
Let $(P,\mathcal{B})$ be a nontrivial PBD with $n$ points and define $r=\max_{x\in P} r_x$. Fix a point $x\in P$ and let $\mathcal{B}_x \subset \mathcal{B}$ be the set of blocks containing $x$. The family of sets $\{B\setminus \{x\} \ : \  B\in \mathcal{B}_x\}$ is a partition of the set $P\setminus \{x\}$. Thus, 
\begin{equation}\label{eq:PBDproof1}
n-1= \sum_{B\in\mathcal{B}_x}(|B|-1)\leq r_x(\max_{B\in\mathcal{B}_x} |B|-1).
\end{equation}
Therefore,  there exists some block $B_0$ containing $x$, where $r_x (|B_0|-1)\geq n-1$. Now, let $y$ be a point not in $B_0$. By a note within the proof of Theorem~\ref{thm:PBDsigma}, we have $r_y\geq |B_0|$ and then
\begin{equation}\label{eq:PBDproof2}
r(r-1)\geq r_x(r_y-1)\geq r_x(|B_0|-1)\geq n-1.
\end{equation}
This yields the assertion.

 Now, assume that  equality holds in $\eqref{eq:pbd2}$. Then,  we have  equalities in \eqref{eq:PBDproof1} and \eqref{eq:PBDproof2}. Thus, all valencies $r_x$ are equal and all blocks have the same size, say $k$, which shows that $(P,\mathcal{B})$ is an $(n,k,1)-$design. Also by \eqref{eq:PBDproof2}, we have $r=k$, i.e. $(P,\mathcal{B})$ is a symmetric design.
\end{proof}

Although the given bound in \eqref{eq:pbd} is sharp, it can be improved if the PBD avoids blocks of large sizes. The following theorem, as an improvement of Theorem~\ref{thm:PBDsigma}, provides some lower bounds on the sum of block sizes, when there are some constraints on the size of a block.

\begin{theorem} \label{thm:PBD}
If  $(P,\mathcal{B})$ is  a PBD with $n$ points where $\tau$ is the maximum size of blocks in $\mathcal{B}$, then
\begin{equation}\label{eq:pbdA}
\sum_{B\in \mathcal{B}} |B| \geq \frac{n(n-1)}{\tau-1}.
\end{equation}
Also if there is a block of size $\kk$, then 
\begin{equation}\label{eq:pbdB}
\sum_{B\in \mathcal{B}} |B| \geq (n+1) \kk - \frac{\kk^2(\kk-1)}{n-1},
\end{equation}
and
\begin{equation}\label{eq:pbdC}
\sum_{B\in \mathcal{B}} |B| \geq \kk-\frac{(n-\kk)(n-5\kk-1)}{2}.
\end{equation}
Moreover, if $\kk\geq n/2$, then there exists a PBD on $n$ points with a block of size $\kk$, for which equality holds in {\em (\ref{eq:pbdC})}.
\end{theorem}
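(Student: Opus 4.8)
The strategy is to extract all three inequalities from one and the same piece of double counting organized around the constrained block, and then to build by hand a PBD that is sharp for \eqref{eq:pbdC}. Inequality \eqref{eq:pbdA} is the quickest: for each $x\in P$ the blocks through $x$ partition $P\setminus\{x\}$ into parts of size at most $\tau-1$, so with $r_x$ the number of blocks through $x$ we get $r_x(\tau-1)\ge n-1$, and summing $\sum_{B\in\mathcal B}|B|=\sum_{x\in P}r_x\ge n(n-1)/(\tau-1)$ gives \eqref{eq:pbdA}.

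For \eqref{eq:pbdB} and \eqref{eq:pbdC}, fix a block $B_0$ with $|B_0|=\kk$, put $Y=P\setminus B_0$ and $t=|Y|=n-\kk$. Any two points of $B_0$ already share the block $B_0$, so every block $B\ne B_0$ has $|B\cap B_0|\le 1$; let $\mathcal B_1$ (resp. $\mathcal B_0$) be the blocks $B\ne B_0$ with $|B\cap B_0|=1$ (resp. $=0$). Counting the $\kk t$ pairs with one endpoint in $B_0$ and one in $Y$, and then the $\binom{t}{2}$ pairs inside $Y$, yields
\[\sum_{B\in\mathcal B_1}(|B|-1)=\kk t,\qquad \sum_{B\in\mathcal B_1}\binom{|B|-1}{2}+\sum_{B\in\mathcal B_0}\binom{|B|}{2}=\binom{t}{2}.\]
Combining the first identity with $\sum_{B\in\mathcal B}|B|=\kk+\sum_{B\in\mathcal B_1}|B|+\sum_{B\in\mathcal B_0}|B|$ gives $\sum_{B\in\mathcal B}|B|=\kk+\kk t+|\mathcal B_1|+\sum_{B\in\mathcal B_0}|B|$, so both bounds reduce to estimating $|\mathcal B_1|$ from below (and then using $\sum_{B\in\mathcal B_0}|B|\ge 0$).

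For \eqref{eq:pbdB} I would drop the $\mathcal B_0$ terms and apply convexity of $x\mapsto\binom{x}{2}$ (Jensen) to the numbers $|B|-1$, $B\in\mathcal B_1$; together with the two identities this forces $|\mathcal B_1|\ge \kk^2 t/(n-1)$ (this is exactly the mechanism behind the Stanton--Kalbfleisch bound), and substituting into $\kk+\kk t+|\mathcal B_1|$ and simplifying with $t=n-\kk$ produces the right-hand side of \eqref{eq:pbdB}. For \eqref{eq:pbdC} convexity is the wrong tool; instead I would exploit the elementary identity $\binom{|B|-1}{2}=(|B|-2)+\binom{|B|-2}{2}$ to rewrite the second displayed identity as $\sum_{B\in\mathcal B_1}(|B|-2)=\binom{t}{2}-\sum_{B\in\mathcal B_1}\binom{|B|-2}{2}-\sum_{B\in\mathcal B_0}\binom{|B|}{2}$, and then substitute it, together with $|\mathcal B_1|=\kk t-\sum_{B\in\mathcal B_1}(|B|-2)$, into the expression above to obtain
\[\sum_{B\in\mathcal B}|B|=\kk+2\kk t-\binom{t}{2}+\sum_{B\in\mathcal B_1}\binom{|B|-2}{2}+\sum_{B\in\mathcal B_0}\Big(\binom{|B|}{2}+|B|\Big)\ \ge\ \kk+2\kk t-\binom{t}{2},\]
after which a one-line computation checks $\kk+2\kk t-\binom{t}{2}=\kk-(n-\kk)(n-5\kk-1)/2$. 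In particular \eqref{eq:pbdC} is an exact identity minus manifestly nonnegative terms, which simultaneously reads off the equality case: $\mathcal B_0=\emptyset$ and every block of $\mathcal B_1$ has size at most $3$.

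For the sharpness assertion, assume $\kk\ge n/2$, so $t=n-\kk\le\kk$. The construction is: keep $B_0$ of size $\kk$; partition the edge set of the complete graph on $Y$ into matchings and assign these injectively to points of $B_0$ (a proper edge-colouring of $K_t$ uses at most $t\le\kk$ colours, so there are enough points, the leftover ones getting the empty matching); for each $x\in B_0$ and each edge $\{y,y'\}$ of the matching at $x$ put the triangle $\{x,y,y'\}$; and for each $x\in B_0$ and each $y\in Y$ not covered by one of $x$'s triangles put the edge $\{x,y\}$ (when $t\le 1$ this degenerates to the near-pencil). A direct check shows each pair lies in exactly one block, so this is a PBD with a block of size $\kk$, and the sum of block sizes is $\kk+3\binom{t}{2}+2\big(\kk t-t(t-1)\big)=\kk+2\kk t-\binom{t}{2}$, which meets \eqref{eq:pbdC}. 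The only points needing a little care are that $\kk t-2\binom{t}{2}=t(\kk-t+1)\ge 0$ (so there really are that many edge-blocks) and that distinct matchings go to distinct points of $B_0$ (so no pair $\{x,y\}$ is covered twice); both follow from $\kk\ge n/2$. I do not anticipate a serious obstacle: the whole argument is bookkeeping, and the one genuinely non-routine idea is recognising that \eqref{eq:pbdC} should be squeezed out of the identity $\binom{a}{2}=(a-1)+\binom{a-1}{2}$ rather than from a convexity estimate as in \eqref{eq:pbdB}.
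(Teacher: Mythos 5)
Your proposal is correct and follows essentially the same route as the paper: the same valency count for \eqref{eq:pbdA}, the same Jensen/Cauchy--Schwarz estimate on the traces $B\setminus B_0$ for \eqref{eq:pbdB}, the same two-level pair count (cross pairs and pairs outside $B_0$) for \eqref{eq:pbdC}, and the identical edge-colouring construction for sharpness. The only cosmetic difference is that you package \eqref{eq:pbdC} as an exact identity plus nonnegative terms, which has the small bonus of reading off the equality conditions ($\mathcal B_0=\emptyset$ and all blocks meeting $B_0$ of size at most $3$).
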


\begin{proof}
For every $x\in P$, let $r_x$ be the number of blocks containing $x$. By Inequality~(\ref{eq:PBDproof1}), we have
\[\sum_{B\in \mathcal{B}} |B| =\sum_{x\in P} r_x\geq \sum_{x\in P} \frac{n-1}{\tau-1}= \frac{n(n-1)}{\tau-1}.\]
In order to prove \eqref{eq:pbdB}, let $B_0\in\mathcal{B}$ and $|B_0|=\kk$. Define,
\[\tilde{\mathcal{B}}= \{B\setminus B_0 \ : \  B\in\mathcal{B}, B\cap B_0\neq \emptyset\}.\]
We have
\[ \sum_{B\in \tilde{\mathcal{B}} }  |B|=\kk (n-\kk).\]
Now, consider the following set
\[S=\{(x,y) \ : \    x\neq y, x,y \in B, B\in \tilde{\mathcal{B}}\}.\]
We have
\begin{equation}\label{eq:S1}
|S|=\sum_{B\in \tilde{\mathcal{B}} } |B| (|B|-1)\geq \frac{1}{|\tilde{\mathcal{B}}|} \left(\sum_{B\in \tilde{\mathcal{B}} } |B|\right)^2 - \sum_{B\in \tilde{\mathcal{B}} } |B|=\frac{1}{|\tilde{\mathcal{B}}|} \kk^2(n-\kk)^2 - \kk(n-\kk).
\end{equation}

On the other hand,  $S\subseteq \{(x,y) \ : \  x,y\in P\setminus B_0\}$. Thus, 
\begin{equation}\label{eq:S2}
|S|\leq (n-\kk)(n-\kk-1).
\end{equation}
Inequalities (\ref{eq:S1}) and (\ref{eq:S2}) yield
\[|\tilde{\mathcal{B}}|\geq \frac{\kk^2(n-\kk)}{n-1}. \]
Finally, 
\[\sum_{B\in\mathcal{B}} |B|\geq |B_0|+\sum_{B\in \tilde{\mathcal{B}}} (|B|+1)\geq \kk+\kk(n-\kk)+\frac{\kk^2(n-\kk)}{n-1}.\]
Thus, we conclude 
\[\sum_{B\in \mathcal{B}} |B| \geq (n+1) \kk - \frac{\kk^2(\kk-1)}{n-1}\]

To prove Inequality \eqref{eq:pbdC}, let $B_0\in\mathcal{B}$ and $|B_0|=\kk$ and assume that $\mathcal{B}$ has $u$ blocks of size 2 intersecting $B_0$. 
Define, 
\[\hat{\mathcal{B}}= \{B\setminus B_0 \ :\  B\in\mathcal{B}, \ B\cap B_0\neq \emptyset, \ |B|\geq 3\}.\]
Thus,
\begin{equation*}
\binom{n-\kk}{2}\geq \sum_{B\in\hat{\mathcal{B}}} {\binom{|B|}{2}}\geq\sum_{B\in\hat{\mathcal{B}}}(|B|-1).
\end{equation*}
Also,
\begin{equation*}
\kk(n-\kk)= u+\sum_{B\in \hat{\mathcal{B}}} |B|.
\end{equation*}
Hence, 
\begin{align*}
\sum_{B\in \mathcal{B}}|B| &\geq  |B_0|+ 2u+ \sum_{B\in \hat{\mathcal{B}}} (|B|+1) = \kk+ 2\kk(n-\kk)-  \sum_{B\in \hat{\mathcal{B}}} (|B|-1) \\
& \geq  \kk+ 2\kk(n-\kk)- \binom{n-\kk}{2}.
\end{align*}

Now, assume that $\kk\geq n/2$ and $B_0=\{x_1,\ldots, x_k\}$. We provide a PBD with a block $B_0$  for which
equality holds in (\ref{eq:pbdC}). Consider a proper edge coloring of $K_{n-\kk}$ by $n-\kk$ colors and let $C_1,\ldots, C_{n-\kk}$ be color classes. Each $C_i$ is a collection of subsets of size $2$. For every $i$,  $1\leq i\leq n-\kk$, add $x_i$ to each member of $C_i$. Now, we have exactly $(n-\kk)(n-\kk-1)/2$ blocks of size $3$. By adding missing pairs as blocks of size $2$, we get a PBD  $(P,\mathcal{B})$ on $n$ points, with blocks of size $2$ and $3$ and a block of size $\kk$. In fact, each block of size $3$ contains two pairs from the set $\{(x,y) \ :\  x\in B_0, y\not\in B_0\}$. Hence,

\begin{align*}
\sum_{B\in\mathcal{B}}|B| &= \kk+\frac{3(n-\kk)(n-\kk-1)}{2}+ 2 (\kk(n-\kk)- (n-\kk)(n-\kk-1))\\
& =\kk-\frac{(n-\kk)(n-5\kk-1)}{2}.
\end{align*}
\end{proof}
\begin{remark}
Let $(P,\mathcal{B})$ be a PBD with $n$ points where $\tau$ is the maximum size of blocks in $\mathcal{B}$. It is easy to check that among the lower bounds (\ref{eq:pbdA}), (\ref{eq:pbdB}) and (\ref{eq:pbdC}), if  $1\leq \tau\leq  (\sqrt{4n-3}+1)/2$, then (\ref{eq:pbdA}) is the best one, if $ (\sqrt{4n-3}+1)/2\leq \tau\leq (n-1)/2$, then (\ref{eq:pbdB}) is the best one and if $(n-1)/2\leq \tau \leq n-1$, then (\ref{eq:pbdC}) is the best one. The diagram of the lower bounds in terms of $\tau$ are depicted in Figure~\ref{fig:PBD} for $n=21$. 

\begin{center}
\begin{tikzpicture}
\pgfdeclareimage[width=12cm]{main}{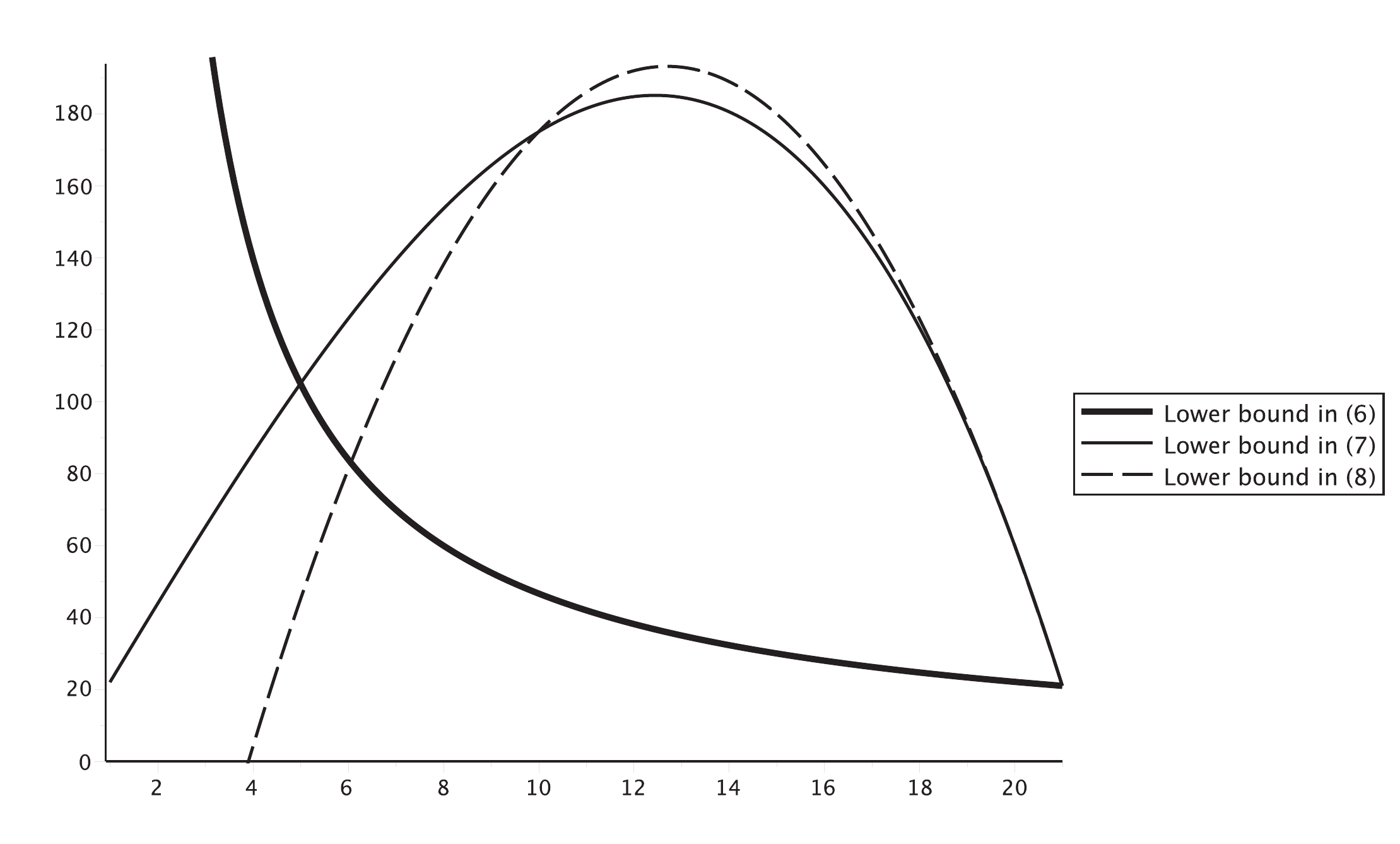}
\pgftext{\pgfuseimage{main}}
\node at (-6.1cm,.2cm)[rotate=90]{\small ${\mathscr{S}}(21,k)$ };
\node at (-.2cm,-3.7cm){\small $k=\tau$ };
\end{tikzpicture}
\captionof{figure}{Diagram of the lower bounds in (\ref{eq:pbdA}), (\ref{eq:pbdB}) and (\ref{eq:pbdC}) for $n=21$.}\label{fig:PBD}
\end{center}

\end{remark}

Now, we apply Theorem \ref{thm:PBD} to improve the bound in (\ref{eq:pbd}), whenever the PBD does not contain large blocks.
\begin{theorem}\label{thm:LargeBlock}
Let $n\geq 10$ and $(P,\mathcal{B})$ be a PBD on $n$ points and assume that $\mathcal{B}$ contains no block of size larger than $n-\frac{1}{2} ( \sqrt{n} +1)$. Then,  we have
\[\sum_{B\in \mathcal{B}} |B|\geq n(\lfloor\sqrt{n}\rfloor+1)-1.\]
Also, the bound is tight in the sense that equality occurs for infinitely many $n$.
\end{theorem}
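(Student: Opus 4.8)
The plan is to bound $\sum_{B\in\mathcal B}|B|$ from below using Theorem~\ref{thm:PBD} applied with $\kk$ taken to be the \emph{maximum} block size, to exploit the integrality of the left-hand side, and then to treat the three possible regimes of the maximum block size separately.

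Write $q=\lfloor\sqrt n\rfloor$ and $N=n(q+1)-1$, so that the claim is $\sum_{B\in\mathcal B}|B|\ge N$. Let $\tau$ be the maximum size of a block of $\mathcal B$; by hypothesis $2\le\tau\le n-\tfrac12(\sqrt n+1)$, hence $\tau<n$ and the PBD is nontrivial. Since $\mathcal B$ contains a block of size $\tau$, applying \eqref{eq:pbdA}, \eqref{eq:pbdB} and \eqref{eq:pbdC} (the last two with $\kk=\tau$) yields
\[
\sum_{B\in\mathcal B}|B|\ \ge\ F(\tau):=\max\Bigl\{\,\tfrac{n(n-1)}{\tau-1},\ (n+1)\tau-\tfrac{\tau^{2}(\tau-1)}{n-1},\ \tau-\tfrac{(n-\tau)(n-5\tau-1)}{2}\,\Bigr\}.
\]
Since $\sum_{B}|B|$ is an integer, it is enough to prove that $F(\tau)>N-1$ for every integer $\tau$ with $2\le\tau\le n-\tfrac12(\sqrt n+1)$. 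I emphasize that the integrality step is not a luxury: at the extremal configuration found below one has $n=q^{2}+1$ and $\tau=q+1$, and there the middle bound equals exactly $N-\tfrac1q$ while the other two are smaller, so $F(\tau)<N$.

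The next step is to split into the three natural ranges of $\tau$. \textbf{(i) $2\le\tau\le q$.} The first bound suffices here: it is decreasing in $\tau$, so $\tfrac{n(n-1)}{\tau-1}\ge\tfrac{n(n-1)}{q-1}$, and $\tfrac{n(n-1)}{q-1}\ge N$ rearranges to $n(n-q^{2})\ge 1-q$, which holds since $n\ge q^{2}$. \textbf{(ii) $q+1\le\tau\le\lfloor(n-1)/2\rfloor$.} I would use the middle bound $g(\tau):=(n+1)\tau-\tfrac{\tau^{2}(\tau-1)}{n-1}$; a quick computation gives $g'(\tau)\ge\tfrac{n+11}{4}>0$ for $\tau\le(n-1)/2$, so $g(\tau)\ge g(q+1)$ throughout the range, and $g(q+1)-(N-1)=q+3-\tfrac{q(q+1)^{2}}{n-1}$, which is positive because $n-1\ge q^{2}-1$ handles $q\ge4$ while the hypothesis $n\ge10$ handles $q=3$. \textbf{(iii) $\lfloor(n-1)/2\rfloor<\tau\le n-\tfrac12(\sqrt n+1)$.} I would use the last bound $h(\tau):=\tau-\tfrac{(n-\tau)(n-5\tau-1)}{2}$, which is concave in $\tau$; hence over the real interval $\bigl[\tfrac{n-1}{2},\,n-\tfrac12(\sqrt n+1)\bigr]$, which contains all the relevant integers, $h$ is minimized at an endpoint. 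At the left endpoint $h\bigl(\tfrac{n-1}{2}\bigr)=\tfrac{n-1}{2}+\tfrac{3(n^{2}-1)}{8}$ is of order $n^{2}$, hence clearly exceeds $N=O(n^{3/2})$ for $n\ge10$. At the right endpoint $\tau_{0}=n-\tfrac12(\sqrt n+1)$, using the factorization $8n-5\sqrt n-3=(\sqrt n-1)(8\sqrt n+3)$ one computes $h(\tau_{0})=n\sqrt n+\tfrac{11n}{8}-\tfrac{3\sqrt n}{2}-\tfrac78$, so that $h(\tau_{0})-(N-1)\ge\tfrac38(\sqrt n-1)(\sqrt n-3)$ after the estimate $N\le n\sqrt n+n-1$; this last quantity is positive precisely for $n\ge10$. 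I expect regime (iii) to be the main obstacle: it is where the target inequality is essentially tight, and the estimate at its right endpoint is exactly what pins down the hypothesis $n\ge10$ (regime (ii) with $q=3$ does too), the verifications in (i) and (ii) being routine manipulations of rational functions of $\tau$.

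For the tightness assertion I would, for every prime power $q\ge3$, build a PBD on $n=q^{2}+1$ points meeting the bound. Start from a projective plane of order $q$, fix a line $\ell$, and delete all points of $\ell$ except one, say $p_{0}$. The surviving nontrivial blocks are: the $q$ lines through $p_{0}$ other than $\ell$, each still of size $q+1$; and the $q^{2}$ lines missing $p_{0}$, each of which meets $\ell$ in a single (deleted) point and so has size $q$. (The line $\ell$ itself shrinks to $\{p_{0}\}$ and is discarded, carrying no pair.) This is a PBD on $n=q^{2}+1$ points with $\lfloor\sqrt n\rfloor=q$, whose largest block has size $q+1$, and $q+1\le n-\tfrac12(\sqrt n+1)$ holds for all $q\ge2$; moreover $\sum_{B}|B|=q(q+1)+q^{2}\cdot q=q^{3}+q^{2}+q=(q^{2}+1)(q+1)-1=N$. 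As there are infinitely many prime powers $q\ge3$, equality holds for infinitely many $n$.
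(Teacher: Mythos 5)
Your proof is correct and follows essentially the same route as the paper's: the same three-way case split on the maximum block size, using \eqref{eq:pbdA}, \eqref{eq:pbdB} and \eqref{eq:pbdC} respectively, with the same monotonicity/endpoint evaluations, and your tightness example (projective plane of order $q$ with a line punctured down to one point) is isomorphic to the paper's (affine plane of order $q$ with a new point adjoined to one parallel class). The only difference is that you make explicit the integrality step (passing from $>N-1$ to $\ge N$), which the paper uses implicitly, and you correctly observe it is genuinely needed at the extremal configuration.
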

\begin{proof}
Let $\tau$ be the maximum size of the blocks in $\mathcal{B}$. If $\tau\leq \sqrt{n}$, then by (\ref{eq:pbdA}),
\[\sum_{B\in\mathcal{B}}|B| \geq \frac{n(n-1)}{\tau-1}\geq \frac{n(n-1)}{\sqrt{n}-1}\geq n(\sqrt{n}+1).\]
Now, suppose that $\tau\geq \lfloor\sqrt{n}\rfloor+1$. Then,  $\mathcal{B}$ contains a block of size larger than or equal $\lfloor\sqrt{n}\rfloor+1$. First assume that $\mathcal{B}$ contains a block of size $\kk$, where $\lfloor\sqrt{n}\rfloor+1\leq \kk\leq \frac{n}{2}$. Then, by (\ref{eq:pbdB}),
\[\sum_{B\in\mathcal{B}}|B|\geq (n+1) \kk - \frac{\kk^2(\kk-1)}{n-1}.\]
The right hand side of the above inequality as a function of $\kk$ takes its minimum on the interval $[\lfloor\sqrt{n}\rfloor+1, \frac{n}{2}]$ at $\lfloor\sqrt{n}\rfloor+1$. Thus, 
\begin{align*}
\sum_{B\in\mathcal{B}}|B|&\geq (n+1) (\lfloor\sqrt{n}\rfloor+1) - \frac{(\lfloor\sqrt{n}\rfloor+1)^2\lfloor\sqrt{n}\rfloor}{n-1}\\
&\geq n (\lfloor\sqrt{n}\rfloor+1) + (\lfloor\sqrt{n}\rfloor+1)(1- \frac{(\sqrt{n}+1)\sqrt{n}}{n-1}) \\
& = n (\lfloor\sqrt{n}\rfloor+1) - \frac{\lfloor\sqrt{n}\rfloor+1}{\sqrt{n}-1} \\
& > n (\lfloor\sqrt{n}\rfloor+1) - 2.
\end{align*}
The last inequality is due to the fact that $n\geq 10$.
Finally, assume that $\mathcal{B}$ contains a block of size $\kk$, where $\frac{n}{2}< \kk\leq n-\frac{1}{2} ( \sqrt{n} +1)$. Then, by (\ref{eq:pbdC})
\[\sum_{B\in \mathcal{B}} |B| \geq \kk-\frac{(n-\kk)(n-5\kk-1)}{2}.\]
 Again, the right hand side of the above inequality as a function of $\kk$ takes its minimum on the interval $[\frac{n}{2},n-\frac{1}{2} ( \sqrt{n} +1)]$ at $n-\frac{1}{2} (\sqrt{n}+1)$. Hence,
 \begin{align*}
\sum_{B\in \mathcal{B}} |B| & \geq  n-\frac{1}{2}(\sqrt{n} +1) -\frac{(\sqrt{n}+1)(-4n+\frac{5}{2} (\sqrt{n}+1)-1)}{4}  \\
&=  n(\sqrt{n}+1)+\frac{3n-7}{8}-\frac{3}{2}\sqrt{n}\\
&>  n(\sqrt{n}+1)-2,
 \end{align*}
 where the last inequality is because $ n\geq 10 $. This completes the proof. 

Finally, in order to prove tightness of the bound, let $q$ be a prime power and $(P,\mathcal{B})$ be an affine plane of order $q$. Suppose that $\{B_1,\ldots, B_{q}\}$ is a parallel class. Add a single new point to all the blocks $B_1,\ldots, B_q$. The new PBD has $n=q^2+1$ points, $q^2$ blocks of size $q$ and $q$ blocks of size $q+1$. Hence,  the sum of its block sizes is 
\[q^3+q^2+q=(q^2+1)(q+1)-1= n(\lfloor \sqrt{n}\rfloor+1)-1. \]
\end{proof}
\section{Sigma clique partition of complement of  graphs }\label{sec:Kn-Km}
Given a graph $G$ and its subgraph $H$, the complement of $H$ in $G$ denoted by $G-H$ is obtained from $G$ by removing all edges (but no vertices) of $H$. If $H$ is a graph on $n$ vertices, then $K_n-H$ is called the complement of $H$ and is denoted by $\overline{H}$.

 In this section, applying  the results of Section~\ref{sec:pbd}, we are going to determine the asymptotic  behaviour of the sigma clique partition number of the graph $K_n-K_m$, when $ m $ is a function of $ n $, as well as Cocktail party graph, the complement of path and cycle on $n$ vertices.

The clique partition number of the graph $K_n-K_m$, for $m\leq n$, has been studied by several authors. 
In order to notice the hardness of determining the exact value of $\cp(K_n-K_m)$, note  that if we could show that $\cp(K_{111}-K_{11})\geq 111$, then we could determine whether there exists a projective plane of order $10$~\cite{pullman82}.
Wallis in \cite{WallisAsymp}, proved that $\cp(K_n-G)\sim n$, if $G$ has $o(\sqrt{n})$ vertices. Also, Erd\H{o}s et al. in \cite{erdos88} showed that $\cp(K_n-K_m)\sim m^2$, if $\sqrt{n}< m< n$ and $m=o(n)$. Moreover, if $m=cn$ and $1/2\leq c\leq 1$, then Pullman et al. in \cite{pullmanII} proved that $\cp(K_n-K_m)=1/2 (n-m) (3m-n-1)$.

In the following theorem, we present upper and lower bounds for $ \scp(K_n-K_m) $ and then we improve these bounds in order to determine asymptotic behaviour of $ \scp(K_n-K_m) $.  
\begin{theorem}\label{thm:kn-km}
For every $m,n$, $1\leq m\leq n$, we have 
\begin{equation}\label{eq:kn-km}
mn-\dfrac{m^2(m-1)}{n-1}\leq\scp(K_n-K_m)\leq(2m-1)(n-m)+1.
\end{equation}
\end{theorem}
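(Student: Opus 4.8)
The plan is to prove the two bounds separately, noting that both follow from translating between clique partitions of $K_n - K_m$ and PBDs on $n$ points.

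\textbf{Upper bound.} First I would exhibit an explicit clique partition of $K_n - K_m$ achieving $\sum_{C} |C| = (2m-1)(n-m)+1$. Write $V(K_n-K_m) = A \cup B$ where $A$ is the set of $m$ ``missing-clique'' vertices (inducing no edges) and $B$ is the set of remaining $n-m$ vertices. The edges of $K_n-K_m$ are exactly: all edges inside $B$, plus all edges between $A$ and $B$. One natural partition: take the single clique $B$ (size $n-m$), and then for each vertex $b \in B$, the edges from $b$ to $A$ must be covered; since $A$ is independent in the graph, each such edge $ba$ lies in a clique of size $2$ (the clique $\{a,b\}$ together with any common neighbours, but $a$'s only neighbours are in $B$, so a clique containing $a$ and $b$ can also contain other vertices of... wait, no — two vertices of $B$ plus $a$ would need $a$ adjacent to both, which holds). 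A cleaner choice: for each $a \in A$, the neighbourhood of $a$ is all of $B$, so $\{a\} \cup B$ is \emph{not} a clique unless... it is, actually $a$ is adjacent to every vertex of $B$ and $B$ is a clique, so $\{a\}\cup B$ is a clique of size $n-m+1$. But then edges inside $B$ would be covered $m+1$ times. So instead: take cliques $\{a\} \cup B$ for a single $a$... This bookkeeping is the only delicate part of the upper bound; I expect the intended construction is to take the clique $B$ once, and then for each of the $m$ vertices $a\in A$, cover the $n-m$ edges from $a$ to $B$ by stars, i.e. $n-m$ cliques of size $2$, giving total $(n-m) + m\cdot 2(n-m)$, which is too big. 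The correct count $(2m-1)(n-m)+1$ suggests: one clique of size $n-m+1$ (namely $\{a_1\}\cup B$), and then for $a_2,\dots,a_m$ cover each of their $n-m$ edges to $B$ with a $2$-clique: total $(n-m+1) + (m-1)\cdot 2(n-m) = (n-m)(2m-1) + 1$. I would verify this is a valid clique partition (every edge of $K_n-K_m$ covered exactly once: edges in $B$ once via the big clique; edge $a_1 b$ once via the big clique; edge $a_i b$ for $i\ge 2$ once via its $2$-clique) and conclude the upper bound.

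\textbf{Lower bound.} This is where I would invoke the PBD machinery from Section~\ref{sec:pbd}. Given any clique partition $\mathcal{C}$ of $K_n - K_m$, adjoin to it any clique partition of the complementary graph, which is $K_m$ together with its $m$ isolated vertices — i.e. add the single clique $K_m$ on the $m$ missing vertices. The union is a clique partition of $K_n$, equivalently a PBD $(P,\mathcal{B})$ on $n$ points having a block of size $m$ (the block $K_m$). Hence $\sum_{B\in\mathcal{B}}|B| \ge \mathscr{S}'(n,m)$, and by Theorem~\ref{thm:PBD}, Inequality~\eqref{eq:pbdB}, this is at least $(n+1)m - \frac{m^2(m-1)}{n-1}$. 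Subtracting the contribution $m$ of the added block $K_m$ gives
\[
\sum_{C\in\mathcal{C}} |C| \ge (n+1)m - \frac{m^2(m-1)}{n-1} - m = mn - \frac{m^2(m-1)}{n-1},
\]
as claimed. Taking the minimum over all $\mathcal{C}$ yields the lower bound in~\eqref{eq:kn-km}.

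\textbf{Main obstacle.} The lower bound is essentially immediate from Theorem~\ref{thm:PBD}, so the only real care needed is in the upper bound: getting a construction whose total block size is \emph{exactly} $(2m-1)(n-m)+1$ rather than something slightly larger, and checking the edge cases $m=1$ (where $K_n-K_1 = K_n$ minus nothing, and the bound should reduce to $1\cdot(n-1)+1 = n$, matching the clique $K_n$ itself) and $m=n$ (where $K_n-K_n$ is empty, bound gives $1$, but the sum over an empty clique family is $0$ — so one should state $1\le m\le n$ with the understanding that the empty graph's $\scp$ is $0$, or note the bound is vacuously fine). I would double-check that the asserted construction is consistent at these extremes and otherwise the proof is a short verification plus one application of~\eqref{eq:pbdB}.
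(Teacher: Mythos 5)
Your proposal is correct and follows essentially the same route as the paper: the lower bound by adjoining the block $K_m$ to form a PBD with a block of size $m$ and applying Inequality~\eqref{eq:pbdB}, and the upper bound via the clique $\{a_1\}\cup B$ of size $n-m+1$ together with $(m-1)(n-m)$ edges, which is exactly the paper's construction. No gaps.
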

\begin{proof}
Adding the clique $K_m$ to every clique partition of $K_n-K_m$ forms a PBD on $n$ points. Thus,  the lower bound is obtained from Inequality~(\ref{eq:pbdB}).

For the upper bound, let $V(K_n)=\{x_1,\ldots, x_n\}$ and $V(K_m)=\{x_{n-m+1},\ldots, x_n\}$. Note that the clique $\{x_1,\ldots, x_{n-m+1}\}$ along with $(m-1)(n-m)$ remaining edges form a clique partition of $K_n-K_m$. Hence,  $\scp(K_n-K_m)\leq (n-m+1)+2(m-1)(n-m)$.  
\end{proof}

In the following theorem, for $m\leq\frac{\sqrt{n}}{2}$, we improve
the lower bound in (\ref{eq:kn-km}).
\begin{theorem}\label{thm:kn-km2}
If $m\leq \frac{\sqrt{n}}{2}$, then 
\[(2m-1) n - O(m^2) \leq \scp(K_n-K_m)\leq  (2m-1) n - \Omega(m^2).\]
\end{theorem}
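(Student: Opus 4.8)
The statement to prove is a two-sided asymptotic estimate for $\scp(K_n-K_m)$ when $m \le \sqrt{n}/2$: both that $\scp(K_n-K_m) \ge (2m-1)n - O(m^2)$ and that $\scp(K_n-K_m) \le (2m-1)n - \Omega(m^2)$. The upper bound direction is a construction, and the lower bound direction is an improvement of Inequality~\eqref{eq:kn-km}. My plan is to treat the two bounds separately, since they are essentially independent.

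\textbf{Lower bound.} For the lower bound $(2m-1)n - O(m^2)$, I would start from a clique partition $\mathcal{C}$ of $K_n - K_m$ achieving $\scp(K_n-K_m)$ and, as in the proof of Theorem~\ref{thm:kn-km}, adjoin the clique $K_m$ on the $m$ missing vertices to obtain a PBD $(P,\mathcal{B})$ on $n$ points with $\sum_{B}|B| = \scp(K_n-K_m) + m$. The key point is that every clique of $\mathcal{C}$ meets the vertex set $V(K_m)$ of size $m$ in at most one vertex (two vertices of $K_m$ are non-adjacent in $K_n-K_m$), so the analysis of Inequality~\eqref{eq:pbdB} can be sharpened: in the notation of that proof with $\kk = m$, the auxiliary family $\tilde{\mathcal{B}} = \{B \setminus B_0 : B \cap B_0 \ne \emptyset\}$ consists of blocks that are \emph{genuine} blocks of $\mathcal{B}$ (none of them lost a point by deleting $B_0$), hence each has size at most $n-m$, but more usefully, since $m$ is tiny the dominant term $\kk(n-\kk)$ is essentially $mn$ and the blocks counted twice (contributing the "$+1$" in $|B|+1$) number exactly $|\tilde{\mathcal{B}}| \ge m^2(n-m)/(n-1) = \Omega(m^2)$, giving $\sum_B |B| \ge m + m(n-m) + \Omega(m^2)$; subtracting $m$ yields $\scp(K_n-K_m) \ge m(n-m) + \Omega(m^2) - m$. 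To reach the claimed $(2m-1)n$, however, I need a factor of roughly $2m$, not $m$, which means I must exploit that the edges incident to $V(K_m)$ — there are $m(n-m)$ of them — lie in cliques that, being small (each meets $V(K_m)$ once), force high total valency. The cleanest route is: let $x_1,\dots,x_m$ be the missing vertices; the cliques through $x_i$ partition $N(x_i) = V(K_n) \setminus (V(K_m))$ which has size $n-m$, and since $m \le \sqrt{n}/2$ a counting argument on pairs (as in \eqref{eq:S1}–\eqref{eq:S2}) shows these cliques are typically small, so the number of cliques through each $x_i$, hence $\sum_i v_{\mathcal{C}}(x_i)$, is large — at least $(n-m) - O(m)$ in a suitable averaged sense; combined with the de Bruijn–Erd\H{o}s-type contribution from the remaining $n-m$ vertices this should assemble to $(2m-1)n - O(m^2)$. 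I expect this bookkeeping — carefully separating the valency contributions of the $m$ special vertices from the $n-m$ ordinary ones and showing they cannot "share" cliques too efficiently — to be the main obstacle.

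\textbf{Upper bound.} For the upper bound $(2m-1)n - \Omega(m^2)$, I would refine the construction in Theorem~\ref{thm:kn-km}. Rather than the crude partition "one big clique plus all leftover edges," I would let $V(K_m) = \{x_1,\dots,x_m\}$ and $W = V(K_n)\setminus V(K_m)$ with $|W| = n-m$, and cover the edges between $V(K_m)$ and $W$ together with the edges inside $W$ using cliques of size $3$ as much as possible: since $m \le \sqrt{n}/2 \le \sqrt{n-m}$, one can use a proper edge colouring / near-resolvable structure on $W$ (as in the tightness construction at the end of the proof of Theorem~\ref{thm:LargeBlock}) to attach the $x_i$'s to pairs in $W$, so that most of the $m(n-m)$ cross-edges get "paired up" inside size-$3$ cliques, each such clique contributing $3$ to the sum while covering $2$ cross-edges plus $1$ edge of $W$. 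This replaces a cost of $2$ per cross-edge by roughly $3/2$, and the savings over the trivial bound — which had the $m(n-m)$ cross-edges contributing $2m(n-m) \approx 2mn$ — should be of order $m^2$ once one accounts for the $\binom{m}{2}$-order defects at the boundary and the $K_m$ itself. I would then verify the arithmetic to confirm the leading term is still $(2m-1)n$ and the lower-order correction is genuinely $-\Omega(m^2)$, i.e.\ strictly smaller than $(2m-1)n$ by a quantity growing at least like a constant times $m^2$. The matching leading coefficient $(2m-1)$ of the two bounds then gives $\scp(K_n-K_m) \sim (2m-1)n$ as advertised in the introduction.
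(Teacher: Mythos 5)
Your lower bound argument has a genuine gap. Applying \eqref{eq:pbdB} to the block $B_0=V(K_m)$ of size $\kk=m$, as you begin by doing, only yields $\sum_B|B|\geq (n+1)m-m^2(m-1)/(n-1)$, i.e.\ $\scp(K_n-K_m)\geq mn-O(m^2)$ --- you correctly notice this is a factor of $2$ short of the target. But the repair you propose rests on a false claim: it is not true that the cliques through each missing vertex $x_i$ are ``typically small'' so that $v_{\mathcal{C}}(x_i)\geq (n-m)-O(m)$ in an averaged sense. In the (essentially optimal) partition consisting of the clique $W\cup\{x_1\}$, where $W=V(K_n)\setminus V(K_m)$, together with single edges, one has $v_{\mathcal{C}}(x_1)=1$, so $\sum_i v_{\mathcal{C}}(x_i)=(m-1)(n-m)+1=m(n-m)-(n-m)+1$, and the deficit $n-m$ is far larger than $O(m^2)$ in the regime $m\leq\sqrt{n}/2$. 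The missing idea is a case analysis on the size $\tau$ of the \emph{largest} block of the PBD obtained by adjoining $K_m$, not on the block $K_m$ itself: if $\tau\leq (n-1)/(2m-1)$, apply \eqref{eq:pbdA}; if $(n-1)/(2m-1)\leq\tau\leq n/2$, apply \eqref{eq:pbdB} with $\kk=\tau$ (noting $\tau\geq 2m-1$ because $(2m-1)^2\leq n-1$) and use monotonicity of the right-hand side in $\tau$; if $n/2\leq\tau\leq n-m+1$, apply \eqref{eq:pbdC} with $\kk=\tau$, minimized at $\tau=n-m+1$. Each case gives $(2m-1)n-O(m^2)$ after subtracting the $m$ contributed by the added block. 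Your averaging-over-the-$x_i$ plan, as stated, cannot be completed.

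The upper bound needs no new construction: $(2m-1)(n-m)+1=(2m-1)n-(2m^2-m-1)$ is already $(2m-1)n-\Omega(m^2)$, so \eqref{eq:kn-km} suffices verbatim. Moreover, the refinement you sketch does not work as described: each triangle $\{x_i,w,w'\}$ consumes an edge $ww'$ of $W$, which is incompatible with keeping the large clique on $W$ (those edges would be covered twice), while dropping the large clique and covering the remaining $\binom{n-m}{2}-m(n-m)/2$ edges of $W$ by blocks of size two costs $\Theta(n^2)\gg(2m-1)n$ when $m\leq\sqrt{n}/2$. Triangle constructions of this kind are the right tool only when the removed clique is large ($m\geq n/2$), as in the tightness construction for \eqref{eq:pbdC}.
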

\begin{proof}
The upper bound holds by (\ref{eq:kn-km}).
For the lower bound, consider an arbitrary clique partition of $K_n-K_m$, say $\mathcal{C}$, and add the clique $K_m$ to obtain a PBD $(P,\mathcal{B})$ with $n$ points. Let $\tau$ be the size of maximum block in $\mathcal{B}$. It is clear that $m\leq \tau\leq n-m+1$. We give the lower bound in the following cases. First note that since $m\leq \sqrt{n}/2$, we have $(2m-1)^2\leq n-1$.

If $\tau\leq \frac{n-1}{2m-1}$, then by (\ref{eq:pbdA}), we have
\[\sum_{C\in\mathcal{C}} |C|\geq (2m-1)n-m.\]

If  $\frac{n-1}{2m-1}\leq \tau\leq n/2$, then  $2m-1\leq \tau\leq n/2$, and  by (\ref{eq:pbdB}),
\[\sum_{C\in\mathcal{C}} |C|\geq (n+1) \tau - \frac{\tau^2(\tau-1)}{n-1}-m. \]
The right hand side of this inequality is increasing as a function of $\tau$ within the interval $[2m-1,n/2]$. Hence, 
\[\sum_{C\in\mathcal{C}} |C|\geq (n+1) (2m-1) - \frac{(2m-1)^2(2m-2)}{n-1}-m\geq  (2m-1) n-m. \]
Finally, if  $n/2\leq \tau\leq n-m+1$, then,  by (\ref{eq:pbdC}),
\[\sum_{C\in\mathcal{C}} |C|\geq\tau-\frac{(n-\tau)(n-5\tau-1)}{2}-m.\]
Consider the right hand side of this inequality as a function of $\tau$ within the interval $[n/2,n-m+1]$. It attains its minimum at $\tau=n-m+1$. Hence, 
\[\sum_{C\in\mathcal{C}} |C|\geq n-2m+1-\frac{(m-1)(5m-4n-6)}{2} = (2m-1) n - O(m^2).\]
\end{proof}

The following lemma is a direct application of Theorem~\ref{thm:LargeBlock} that gives a lower bound for $ \scp(K_n-H) $ in terms of $ \scp(H) $. Here, $\omega(G)$ stands for the clique number of graph $G$. 

\begin{lemma}\label{lem:lower bound K_n-H}
Let $ H $ be a graph on $ m $ vertices. If $  \omega(H)\leq n-\frac{1}{2}(\sqrt{n}+1) $ and $\omega(\overline{H})\leq m-\frac{1}{2}(\sqrt{n}+1)$, then $$ \scp(K_n-H)+\scp(H)\geq n(\lfloor\sqrt{n}\rfloor+1)-1. $$
\end{lemma}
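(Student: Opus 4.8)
The plan is to combine a clique partition of $K_n - H$ with a clique partition of $H$ to produce a clique partition of $K_n$, equivalently a PBD on $n$ points, and then invoke Theorem~\ref{thm:LargeBlock}. Concretely, fix an optimal clique partition $\mathcal{C}_1$ of $K_n-H$ realizing $\scp(K_n-H) = \sum_{C\in\mathcal{C}_1}|C|$, and an optimal clique partition $\mathcal{C}_2$ of $H$ realizing $\scp(H) = \sum_{C\in\mathcal{C}_2}|C|$. Every edge of $K_n$ lies either in $H$ or in $K_n-H$, and in exactly one of the two, so $\mathcal{C} := \mathcal{C}_1 \cup \mathcal{C}_2$ is a clique partition of $K_n$; viewing the cliques as blocks, $(V(K_n),\mathcal{C})$ is a PBD on $n$ points with $\sum_{B\in\mathcal{C}}|B| = \scp(K_n-H) + \scp(H)$.

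The next step is to verify the size hypothesis of Theorem~\ref{thm:LargeBlock}: no block of $\mathcal{C}$ has size exceeding $n - \tfrac{1}{2}(\sqrt{n}+1)$. A block coming from $\mathcal{C}_1$ is a clique of $K_n-H$, hence an independent set in $H$, so its size is at most the independence number of $H$, which equals $\omega(\overline{H})$; by hypothesis $\omega(\overline{H}) \le m - \tfrac12(\sqrt n + 1) \le n - \tfrac12(\sqrt n+1)$ since $m\le n$. A block coming from $\mathcal{C}_2$ is a clique of $H$, so its size is at most $\omega(H) \le n - \tfrac12(\sqrt n + 1)$ by hypothesis. Thus every block has size at most $n - \tfrac12(\sqrt n+1)$, and Theorem~\ref{thm:LargeBlock} applies (note $n \ge 10$ is implicitly available here, or should be added as a hypothesis if one wants to be pedantic — the paper's statement of Theorem~\ref{thm:LargeBlock} requires it; I would point out that for $n < 10$ the claimed bound can be checked to be harmless or the hypotheses force $m$ too small). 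Applying the theorem gives
\[
\scp(K_n-H) + \scp(H) \;=\; \sum_{B\in\mathcal{C}}|B| \;\geq\; n(\lfloor\sqrt{n}\rfloor+1) - 1,
\]
which is the desired inequality.

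There is essentially no hard step here — the only point requiring care is the translation between "clique in $K_n - H$" and "independent set in $H$", so that the bound $\omega(\overline H)$ (rather than $\omega(H)$) governs the block sizes coming from $\mathcal{C}_1$; one must not accidentally bound those blocks by $\omega(K_n-H)$, which could be as large as $n$. A secondary bookkeeping point is making sure the two hypotheses $\omega(H)\le n-\tfrac12(\sqrt n+1)$ and $\omega(\overline H)\le m-\tfrac12(\sqrt n+1)$ together cover \emph{all} blocks of $\mathcal{C}$, which they do precisely because $\mathcal{C}=\mathcal{C}_1\cup\mathcal{C}_2$ and $m - \tfrac12(\sqrt n+1) \le n - \tfrac12(\sqrt n+1)$. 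I would expect the whole proof to be three or four sentences once Theorem~\ref{thm:LargeBlock} is in hand.
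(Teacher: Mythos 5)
Your overall strategy is exactly the paper's: merge a clique partition of $K_n-H$ with one of $H$ to get a PBD on $n$ points and invoke Theorem~\ref{thm:LargeBlock}. However, the one step you yourself single out as ``the only point requiring care'' is carried out incorrectly. A clique of $K_n-H$ is \emph{not} an independent set of $H$: by the paper's definition, $K_n-H$ has all $n$ vertices, only the edges of $H$ are deleted, so a clique of $K_n-H$ may contain all $n-m$ vertices outside $V(H)$ together with an independent set of $H$. Its size is therefore bounded by $(n-m)+\alpha(H)=(n-m)+\omega(\overline H)$, not by $\omega(\overline H)$. (Take $H=K_m$: then $K_n-H=K_n-K_m$ has cliques of size $n-m+1$, while $\alpha(H)=1$.) Your claimed bound $\omega(\overline H)\le m-\tfrac12(\sqrt n+1)$ on the blocks of $\mathcal{C}_1$ is thus false in general.

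The lemma survives because the hypothesis is calibrated precisely to absorb the missing $n-m$: the correct estimate is
\[
\tau \;\le\; (n-m)+\omega(\overline H)\;\le\;(n-m)+m-\tfrac12(\sqrt n+1)\;=\;n-\tfrac12(\sqrt n+1),
\]
which is the paper's computation and is exactly the threshold Theorem~\ref{thm:LargeBlock} requires. So your final numerical conclusion is correct, but the justification of the crucial inequality is wrong, and your version also obscures why the hypothesis on $\omega(\overline H)$ involves $m$ rather than $n$. Your side remark that $n\ge 10$ should really appear as a hypothesis (since Theorem~\ref{thm:LargeBlock} requires it) is a fair and correct observation about the paper's statement.
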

\begin{proof}
Assume that $ \cal C $ is an arbitrary clique partition for $ K_n-H $ and $ \tau $ is the size of largest clique in $ \cal C $. Then,  $ \tau\leq n-m+\omega(\overline{H})\leq n-m+m-\frac{1}{2}(\sqrt{n}+1)=n-\frac{1}{2}(\sqrt{n}+1)$. Also, by assumption, $H$ has no clique of size larger than $n-\frac{1}{2}(\sqrt{n}+1)$. Moreover, every clique partition of $H$ along with every clique partition for $K_n-H$ form a PBD. Hence, by Theorem~\ref{thm:LargeBlock}, $ \scp(K_n-H)+\scp(H)\geq n(\lfloor\sqrt{n}\rfloor+1)-1$. 
\end{proof}

We need the following lemma in order to improve the upper bound in \eqref{eq:kn-km} whenever $ \sqrt{n}\leq m\leq n $. The idea is similar to 
\cite{WallisAsymp} that uses a projective plane of appropriate size to give a clique partition for the graph $K_n-K_m$.
\begin{lemma}\label{lem:design}
Let $ H $ be a graph on $ m $ vertices. If there exists a $(v,k,1)-$design, such that $k\geq m$ and $v-k\geq n-m$, then $\scp(K_n-H)\leq n(v-1)/(k-1)+\scp(\overline{H})-m$.
\end{lemma}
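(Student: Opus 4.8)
The plan is to build an explicit clique partition of $K_n - H$ by superimposing the block structure of a $(v,k,1)$-design onto an optimal clique partition of $\overline{H}$. First I would fix a $(v,k,1)$-design $(Q,\mathcal{D})$ with $k \geq m$ and $v-k \geq n-m$, and single out one block $D_0 \in \mathcal{D}$ of size $k$. On $D_0$ I want to ``install'' the graph $H$: since $|D_0| = k \geq m = |V(H)|$, I can fix an injection $\phi : V(H) \hookrightarrow D_0$ and identify the $n$ vertices of $K_n$ with the $m$ points $\phi(V(H))$ together with $n-m$ further points chosen from $Q \setminus D_0$, which is possible because $|Q \setminus D_0| = v - k \geq n - m$. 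Call this $n$-element set $P \subseteq Q$.

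Next I would assemble the clique partition $\mathcal{C}$ of $K_n - H$. Take an optimal clique partition $\mathcal{C}_0$ of $\overline{H}$ with $\sum_{C \in \mathcal{C}_0}|C| = \scp(\overline{H})$, and transport it via $\phi$ to a family of cliques inside $D_0 \subseteq P$; this covers exactly the non-edges of $H$ inside $D_0$, i.e. the edges of $K_n$ between vertices of $V(H)$ that are not edges of $H$. For every other block $D \in \mathcal{D}$ with $D \neq D_0$, take the clique $D \cap P$. Because $(Q,\mathcal{D})$ has $\lambda = 1$, every pair of points of $Q$ lies in exactly one block, so every pair inside $P$ is covered: a pair with both endpoints in $\phi(V(H))$ lying in a block other than $D_0$ would contradict that this pair already lies in $D_0$, so such pairs are covered only by $\mathcal{C}_0$ (hence exactly the $H$-edges among them are left out, as desired), while every pair with at least one endpoint outside $\phi(V(H))$ lies in a unique block $D \neq D_0$ and is covered exactly once by $D \cap P$. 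Thus $\mathcal{C} = \{\phi(C) : C \in \mathcal{C}_0\} \cup \{D \cap P : D \in \mathcal{D}, D \neq D_0\}$ is a genuine clique partition of $K_n - H$.

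Finally I would bound $\sum_{C \in \mathcal{C}}|C|$. The contribution of the transported part is exactly $\scp(\overline{H})$. For the design part, $\sum_{D \neq D_0}|D \cap P| \leq \sum_{D \neq D_0}|D| = \sum_{D \in \mathcal{D}}|D| - k$. Now in a $(v,k,1)$-design every point lies in $r = (v-1)/(k-1)$ blocks, so $\sum_{D \in \mathcal{D}}|D| = \sum_{x \in Q} r = vr = v(v-1)/(k-1)$; I would prefer instead to count only over $P$: each point of $P$ lies in exactly $r = (v-1)/(k-1)$ blocks of $\mathcal{D}$, and subtracting the one block $D_0$ whenever a point lies in $D_0$, one gets $\sum_{D \neq D_0}|D \cap P| = \sum_{x \in P} (r - \mathbf{1}[x \in D_0]) = n r - |P \cap D_0| = nr - m$. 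Adding the two parts yields $\sum_{C \in \mathcal{C}}|C| \leq n(v-1)/(k-1) + \scp(\overline{H}) - m$, which is the claimed bound.

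The main obstacle, and the step needing the most care, is the bookkeeping in the last paragraph: one must count incidences over the $n$ points of $P$ rather than all $v$ points of $Q$, and correctly account for the removal of block $D_0$ (it is removed once, and it meets $P$ in exactly the $m$ points $\phi(V(H))$), so that the $-m$ term appears rather than, say, $-k$. A secondary point to verify cleanly is that no pair inside $\phi(V(H))$ gets double-covered: this is exactly where $\lambda = 1$ is used — such a pair lies in $D_0$ and in no other block, hence is handled solely by the transported partition $\mathcal{C}_0$, so precisely the non-edges of $H$ are covered there and the edges of $H$ are (correctly) left uncovered.
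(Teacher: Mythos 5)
Your proposal is correct and follows essentially the same route as the paper: select one block of the $(v,k,1)$-design to host $V(H)$, use the remaining blocks (restricted to the chosen $n$ points) as cliques, patch the pairs inside the distinguished block with an optimal clique partition of $\overline{H}$, and count incidences over the $n$ points to get $nr-m+\scp(\overline{H})$ with $r=(v-1)/(k-1)$. The paper phrases the setup as deleting $k-m$ points from the chosen block and $v-k-(n-m)$ points outside it, which is the same construction as your injection-and-selection.
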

\begin{proof}
Let $ (P,\mathcal{B}) $ be a $ (v,k,1)-$design. 
Select a block $ B_1\in \mathcal{B} $ and delete $ k-m $ points from it. Also, delete $ v-k-(n-m) $ points not in $ B_1 $. Now, consider the remaining points as vertices of $ K_n-H $ and each block except $ B_1 $ as a clique in $ K_n-H $. Thus, $ \scp(K_n-H)\leq r(n-m)+(r-1)m+\scp(\overline{H})=nr-m+\scp(\overline{H})$, where $ r=(v-1)/(k-1) $ is the number of blocks containing a single point.
\end{proof}
We are going to apply Lemma~\ref{lem:design} to projective planes and provide a clique covering for $ K_n-H $. Since the existence of projective planes of order $ q $ is only known for prime powers, we need the following well-known theorem to approximate an integer by a prime.  
\begin{thm}{\em\cite{baker}}\label{lem:prime gap}
There exists a constant $ x_0 $ such that for every integer $ x > x_0 $, the interval $ [x , x+x^{.525}] $ contains  prime numbers.
\end{thm}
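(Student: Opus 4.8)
The statement is the Baker--Harman--Pintz theorem on gaps between consecutive primes, so the plan is the standard analytic one: reduce the existence of a prime in the short interval $[x,x+x^{0.525}]$ to a quantitative estimate for the von Mangoldt sum over that interval, and then bound that sum by a combination of exponential-sum technology and sieve methods. Concretely, set $\theta=0.525$ and $y=x^{\theta}$; it suffices to prove that $\sum_{x<n\le x+y}\Lambda(n)\gg y$ for all sufficiently large $x$, since the contribution of the prime powers $p^{k}$ with $k\ge 2$ to this sum is $O(x^{1/2+\varepsilon})=o(y)$, so a genuine prime must lie in the interval and the constant $x_{0}$ of the statement is whatever makes the error terms below explicit.

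First I would decompose the short sum of $\Lambda(n)$ by a combinatorial identity --- Heath-Brown's identity, or equivalently Vaughan's identity --- into $O((\log x)^{O(1)})$ sums of ``Type I'' shape $\sum_{m\le M}\bigl(\sum_{x/m<n\le(x+y)/m}1\bigr)$ with $M$ bounded by a suitable power of $x$, and ``Type II'' (bilinear) sums $\sum_{m\sim M}\sum_{n\sim N}a_{m}b_{n}\,\mathbf 1[x<mn\le x+y]$ with $MN\asymp x$ and $a_{m},b_{n}$ divisor-bounded. The Type I pieces are controlled directly, via the classical estimate for the number of integers of a short interval in a fixed residue class (equivalently, van der Corput / Kuzmin--Landau exponential-sum bounds for the relevant error terms), and they cause no trouble so long as $M$ stays below the admissible threshold.

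The substance is the Type II sums. For $N$ in a favourable range one obtains an asymptotic for $\sum_{x<mn\le x+y}a_{m}b_{n}$ by Fourier-expanding the indicator of the interval and then estimating the resulting bilinear exponential sum $\sum_{m}\sum_{n}a_{m}b_{n}\,e(\alpha mn)$ via Cauchy--Schwarz and the double large sieve, supplemented by zero-density estimates for $\zeta(s)$ near $\Re s=1$ (the density theorems of Huxley, Heath-Brown and Jutila, through the Halász--Montgomery method) to annihilate the contribution of the zeta zeros. For the remaining ``awkward'' ranges of $N$, where no asymptotic is available, I would invoke Harman's sieve: rather than evaluating the short sum exactly one bounds it from below, discarding in a controlled way only those parts of a Buchstab-type decomposition that cannot be asymptotically evaluated, and re-using the role-reversal trick (swapping which variable plays the ``smooth'' role) to enlarge the set of usable ranges. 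The output is a positive lower bound of the correct order $\gg y$, which is exactly what is needed.

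The main obstacle --- and the reason the exponent is $0.525$ rather than something smaller --- is precisely the balance in the Type II analysis: the exponential-sum estimates yield an asymptotic only when $N$ avoids a forbidden window around $x^{1/2}$, and the sieve losses accumulate as that window widens, so the final exponent emerges from a delicate optimisation over the admissible Type I length, the admissible Type II ranges, and the zero-density exponents. Since this entire circle of ideas is classical and orthogonal to the combinatorial content of the present paper, in the write-up I would state the result as cited from \cite{baker} and use it thereafter as a black box.
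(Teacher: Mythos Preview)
The paper gives no proof of this statement at all: it is quoted verbatim from \cite{baker} and used purely as a black box in the subsequent applications (Theorems~\ref{thm:kn--km} and~\ref{thm:kn-km1}). Your final sentence already anticipates this, so in that sense your proposal and the paper agree.

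Everything preceding that final sentence is surplus relative to the paper, though your outline of the Baker--Harman--Pintz argument (combinatorial decomposition of $\Lambda$, Type~I/Type~II split, zero-density input, Harman's sieve for the awkward ranges) is a fair high-level summary of how the exponent $0.525$ is actually obtained. Since the paper is combinatorial and never engages with any of this analytic machinery, the appropriate write-up is exactly what you suggest at the end: state the theorem with the citation and move on.
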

The following two theorems determine asymptotic behaviour of $ \scp(K_n-K_m) $, when $\sqrt{n}/2\leq m$ and $ m=o(n) $. 
\begin{theorem}\label{thm:kn--km}
Let $ H $ be a graph on $ m $ vertices. If $\frac{\sqrt{n}}{2}\leq m\leq \sqrt{n}$, then $\scp(K_n-H)\leq (1+o(1))\, n\sqrt{n}$. Moreover,  $ \scp(K_n-K_m)= (1+o(1))\, n\sqrt{n}$.
\end{theorem}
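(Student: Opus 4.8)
The plan is to establish the upper bound $\scp(K_n-H)\le(1+o(1))\,n\sqrt n$ for an arbitrary graph $H$ on $m$ vertices, and then to derive the matching lower bound for $H=K_m$ from Lemma~\ref{lem:lower bound K_n-H}. For the upper bound I would feed a projective plane of suitable order into Lemma~\ref{lem:design}. A projective plane of order $q$ is a $(q^2+q+1,\,q+1,\,1)$-design, and since $m\le\sqrt n$, any prime power $q\ge\lceil\sqrt n\rceil$ meets both hypotheses of Lemma~\ref{lem:design}: $k=q+1\ge m$ and $v-k=q^2\ge n\ge n-m$. To produce such a $q$ that is not much bigger than $\sqrt n$, I would invoke Theorem~\ref{lem:prime gap}: for $n$ large there is a prime $q$ in $[\lceil\sqrt n\rceil,\ \lceil\sqrt n\rceil+\lceil\sqrt n\rceil^{.525}]$, so that $q=(1+o(1))\sqrt n$. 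Lemma~\ref{lem:design} then yields
\[
\scp(K_n-H)\ \le\ n\cdot\frac{v-1}{k-1}+\scp(\overline H)-m\ =\ n(q+1)+\scp(\overline H)-m .
\]
Here $\scp(\overline H)\le\lfloor m^2/2\rfloor\le n/2$ since $\overline H$ has $m\le\sqrt n$ vertices, and $n(q+1)=n\sqrt n+O(n^{1.2625})$; both error terms are $o(n\sqrt n)$, so $\scp(K_n-H)\le(1+o(1))\,n\sqrt n$.

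For the ``moreover'' statement, taking $H=K_m$ above (so $\overline H$ is edgeless and $\scp(\overline H)=0$) gives $\scp(K_n-K_m)\le(1+o(1))\,n\sqrt n$. For the reverse inequality I would apply Lemma~\ref{lem:lower bound K_n-H} with $H=K_m$: here $\omega(K_m)=m\le\sqrt n\le n-\tfrac12(\sqrt n+1)$ for $n$ large, and $\omega(\overline{K_m})=1\le m-\tfrac12(\sqrt n+1)$ since $m\ge\sqrt n/2$. This last inequality can fail only for the at most two integers $m$ in $[\sqrt n/2,\ \sqrt n/2+\tfrac32)$, and for those one argues directly: enlarging any clique partition of $K_n-K_m$ by the clique $K_m$ gives a PBD on $n$ points whose largest block has size at most $n-m+1$, and the three-case analysis behind Theorem~\ref{thm:LargeBlock} using \eqref{eq:pbdA}, \eqref{eq:pbdB} and \eqref{eq:pbdC} still forces the sum of block sizes to be $(1-o(1))\,n\sqrt n$ (the sharp constant $n-\tfrac12(\sqrt n+1)$ there is needed only to reach the exact value $n(\lfloor\sqrt n\rfloor+1)-1$). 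In the generic case Lemma~\ref{lem:lower bound K_n-H} gives $\scp(K_n-K_m)+\scp(K_m)\ge n(\lfloor\sqrt n\rfloor+1)-1$, and since $\scp(K_m)\le m=o(n\sqrt n)$ we obtain $\scp(K_n-K_m)\ge(1-o(1))\,n\sqrt n$; combined with the upper bound this gives $\scp(K_n-K_m)=(1+o(1))\,n\sqrt n$.

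The main obstacle --- the only genuinely non-elementary input --- is the existence of a prime of size roughly $\sqrt n$, which is exactly what Theorem~\ref{lem:prime gap} supplies; with that in hand the rest is the routine verification that the prime-gap error $O(n^{1.2625})$, the term $\scp(\overline H)\le n/2$, and $\scp(K_m)\le m$ are each of smaller order than $n\sqrt n$. The only other point needing a moment's care is checking (or circumventing, as above) the hypotheses of Lemma~\ref{lem:lower bound K_n-H} at the handful of boundary values of $m$ near $\sqrt n/2$.
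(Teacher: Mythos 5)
Your proposal is correct and follows essentially the same route as the paper: a projective plane of prime(-power) order $q=(1+o(1))\sqrt n$ obtained from Theorem~\ref{lem:prime gap} is fed into Lemma~\ref{lem:design} for the upper bound, and Lemma~\ref{lem:lower bound K_n-H} gives the matching lower bound for $H=K_m$. Your extra check of the hypothesis $\omega(\overline{K_m})\le m-\tfrac12(\sqrt n+1)$ at the boundary values $m$ near $\sqrt n/2$ is a point the paper silently skips over, and your workaround there is sound.
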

\begin{proof}
Let $q$ be the smallest prime power greater than or equal to $\sqrt{n}$. By Theorem~\ref{lem:prime gap}, we have $\sqrt{n}\leq q\leq \sqrt{n}+\sqrt{n}^{.525}$. Thus, 
$ q\geq\sqrt{n}>m-1 $
and
$ q^2\geq n\geq n-m $. Since there exists a projective plane of order $q$, by Lemma~\ref{lem:design}, we have
\[\scp(K_n-H)\leq n (q+1)-m+\scp(\overline{H}) \leq n (q+1)-m+\frac{m^2}{2},\]
where the last inequality is due to the fact that sigma clique partition number of every graph on $ n $ vertices is at most $ n^2/2 $ \cite{chung81,gyori80}. Hence,
\[\scp(K_n-H)\leq n^{1.5}+n^{1.2625}+1.5\ n= (1+o(1))\, n\sqrt{n}.\]
Also, by Lemma \ref{lem:lower bound K_n-H}, $ \scp(K_n-K_m)\geq (1+o(1))\, n\sqrt{n} $.
\end{proof}
In the following theorem, for $\sqrt{n}\leq m\leq n$, we improve
the upper bound in (\ref{eq:kn-km}).
\begin{theorem}\label{thm:kn-km1}
If $\sqrt{n}\leq m\leq n$, then $\scp(K_n-K_m)\leq (1+o(1))\, nm$ and if in addition $ m=o(n) $, then $\scp(K_n-K_m)= (1+o(1))\,nm$.
\end{theorem}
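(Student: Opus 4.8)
The plan is to read the upper bound off Lemma~\ref{lem:design}, fed by a projective plane whose order is as close to $m$ as the known prime gaps allow, and to obtain the matching lower bound (in the range $m=o(n)$) directly from the bound already proved in Theorem~\ref{thm:kn-km}. The substantive half is the upper bound; the lower bound is essentially free.

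For the upper bound, first I would let $q$ be the smallest prime power with $q\ge m$. By Theorem~\ref{lem:prime gap} we have $m\le q\le m+m^{.525}$ once $m$ exceeds the absolute constant $x_0$, and the finitely many remaining pairs $(n,m)$ (for which $m\le x_0$ forces $n\le m^2\le x_0^2$) do not affect an asymptotic statement. Since a projective plane of order $q$ is a $(q^2+q+1,q+1,1)$-design, I would apply Lemma~\ref{lem:design} with $H=K_m$, $v=q^2+q+1$ and $k=q+1$; here $\overline H=\overline{K_m}$ is edgeless, so $\scp(\overline H)=0$. The two hypotheses of the lemma hold: $k=q+1\ge m$, and $v-k=q^2\ge m^2\ge n\ge n-m$ because $m\ge\sqrt n$. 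The lemma then gives
\[
\scp(K_n-K_m)\ \le\ n\cdot\frac{v-1}{k-1}-m\ =\ n(q+1)-m\ \le\ nm+n\,m^{.525}+n-m .
\]
Since $m\ge\sqrt n$, each of $n\,m^{.525}$, $n$ and $m$ is $o(nm)$ uniformly over $\sqrt n\le m\le n$, so $\scp(K_n-K_m)\le(1+o(1))\,nm$, which is the first assertion.

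For the lower bound, assuming in addition $m=o(n)$, I would simply invoke Theorem~\ref{thm:kn-km}, which gives $\scp(K_n-K_m)\ge mn-m^2(m-1)/(n-1)$. The subtracted term is at most $m^3/(n-1)=mn\cdot m^2/\bigl(n(n-1)\bigr)=o(mn)$ when $m=o(n)$, hence $\scp(K_n-K_m)\ge(1-o(1))\,mn$. Combined with the upper bound this yields $\scp(K_n-K_m)=(1+o(1))\,nm$.

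I do not expect a serious obstacle: the real content is Lemma~\ref{lem:design} together with the prime-gap input of Theorem~\ref{lem:prime gap}, both already in hand, and the fact $\scp(\overline{K_m})=0$. The only points demanding a little care are to verify that all the error terms above are $o(nm)$ \emph{uniformly} in $m$ across the whole interval $\sqrt n\le m\le n$ (in particular that $q/m\to 1$ uniformly), and to dispose cleanly of the bounded range $m\le x_0$.
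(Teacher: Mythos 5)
Your proof is correct and follows essentially the same route as the paper: the smallest prime power $q\ge m$ (controlled via the Baker--Harman--Pintz gap), a projective plane of order $q$ fed into Lemma~\ref{lem:design} with $\scp(\overline{K_m})=0$ to get $\scp(K_n-K_m)\le n(q+1)-m=(1+o(1))nm$, and the lower bound read off from Theorem~\ref{thm:kn-km} when $m=o(n)$. Your extra care in checking the lemma's hypotheses and the uniformity of the error terms is a welcome but inessential refinement of the paper's argument.
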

\begin{proof}
Let $\sqrt{n}\leq m\leq n$, and also let $q$ be the smallest prime power which is greater than or equal to $m$. By Lemma \ref{lem:prime gap}, $m\leq q\leq m+m^{.525}$. Thus,  $q=(1+o(1))\, m$. Since there exists a projective plane of order $q$, by Lemma~\ref{lem:design}, we have
\[\scp(K_n-K_m)\leq n (q+1)-m = (1+o(1))\, nm.\]
On the other hand, when $ m=o(n) $, Inequality~(\ref{eq:kn-km}) yields  $ \scp(K_n-K_m)\geq (1+o(1))\, nm$, which completes the proof.
\end{proof}
Theorems \ref{thm:kn-km2}, \ref{thm:kn--km} and \ref{thm:kn-km1}  make clear asymptotic behaviour of $K_n-K_m$ in case $m=o(n)$.
\begin{cor}\label{cor: K_n-K_m}
Let $m$ be a function of $n$. Then
\begin{itemize}
\item[\rm i)]  If $m\leq \frac{\sqrt{n}}{2}$, then $\scp(K_n-K_m)\sim (2m-1)n$.
\item[\rm ii)] If $\frac{\sqrt{n}}{2}\leq m\leq \sqrt{n}$, then $\scp(K_n-K_m)\sim n\sqrt{n}$.
\item[\rm iii)] If $m\geq \sqrt{n}$ and $m=o(n)$, then $\scp(K_n-K_m)\sim mn$.
\end{itemize}
\end{cor}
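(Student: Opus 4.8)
The plan is to obtain the three cases as immediate consequences of the three asymptotic estimates already proved, so the only real content is checking that the error terms are of lower order than the stated main terms. For case (ii), I would simply quote the last sentence of Theorem~\ref{thm:kn--km}: under $\sqrt n/2 \le m \le \sqrt n$ it gives $\scp(K_n-K_m) = (1+o(1))\,n\sqrt n$, which is exactly $\scp(K_n-K_m)\sim n\sqrt n$. For case (iii), I would quote the last sentence of Theorem~\ref{thm:kn-km1}: under $\sqrt n \le m \le n$ and $m = o(n)$ it gives $\scp(K_n-K_m) = (1+o(1))\,nm$, i.e. $\scp(K_n-K_m)\sim mn$. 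Neither of these requires any further argument.

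For case (i), the input is Theorem~\ref{thm:kn-km2}, which sandwiches
\[
(2m-1)n - O(m^2) \;\le\; \scp(K_n-K_m) \;\le\; (2m-1)n - \Omega(m^2).
\]
To conclude $\scp(K_n-K_m)\sim(2m-1)n$ I would divide through by $(2m-1)n$ and show the quadratic slack $m^2/\big((2m-1)n\big)$ tends to $0$. This is where the hypothesis $m\le\sqrt n/2$ enters: it forces $m=o(n)$, and more concretely $m^2/\big((2m-1)n\big)\le m/n\le 1/(2\sqrt n)\to 0$ (the bounded-$m$ subcase being even easier, as the slack is then $O(1/n)$). Hence both the lower and the upper bound equal $(1+o(1))(2m-1)n$, giving the claim.

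I do not anticipate any genuine obstacle here: the corollary is pure bookkeeping on top of Theorems~\ref{thm:kn-km2}, \ref{thm:kn--km} and \ref{thm:kn-km1}. If anything needs a word of care it is only the ranges of $m$: the three hypotheses $m\le\sqrt n/2$, $\sqrt n/2\le m\le\sqrt n$, and $\sqrt n\le m$ with $m=o(n)$ together cover every $m=o(n)$ (with harmless overlap at the endpoints $m=\sqrt n/2$ and $m=\sqrt n$, where the competing estimates agree up to a factor $1+o(1)$), so the three items are exhaustive and mutually consistent on that regime.
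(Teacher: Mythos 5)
Your proposal is correct and matches the paper's own (implicit) derivation: the corollary is stated as an immediate consequence of Theorems~\ref{thm:kn-km2}, \ref{thm:kn--km} and \ref{thm:kn-km1}, with cases (ii) and (iii) read off directly and case (i) following because the $O(m^2)$ slack satisfies $m^2/\bigl((2m-1)n\bigr)\le m/n\le 1/(2\sqrt n)\to 0$ when $m\le\sqrt n/2$. No further comment is needed.
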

In what follows, we consider the case $m=cn$, where $c$ is a constant.
 First note that if $1/2\leq c\leq 1$, then by Theorem~\ref{thm:PBD}, since $m\geq n/2$, there exists a PBD on $n$ points with a block of size $m$, for which equality holds in (\ref{eq:pbdC}). Hence, we have $\scp(K_n-K_m)= \frac{(1-c)}{2}\bigg((5c-1)n^2+n\bigg)$.
 In order to deal with the case $c<1/2$, we need the following well-known existence theorem of resolvable designs.
 \begin{thm}{\em \cite{Lu}}\label{thm:resol}
 Given any integer $k\geq 2$, there exists an integer $v_0(k)$ such that for every $ v\geq v_0(k) $, a $(v,k,1)-$resolvable design exists if and only if $v \overset{k}{\equiv} 0$ and $v-1\overset{k-1}{\equiv} 0$.
 \end{thm}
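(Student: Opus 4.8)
The two congruences are visibly necessary: each parallel class partitions the $v$ points into $v/k$ blocks, so $k\mid v$; and each point lies in exactly one block of each parallel class while being paired with the other $v-1$ points $k-1$ at a time, so there are $(v-1)/(k-1)$ parallel classes and $(k-1)\mid(v-1)$; equivalently $v\equiv k\pmod{k(k-1)}$. For the converse it is convenient to reformulate. Write $v=(k-1)u+1$, distinguish a point $\infty$, and note that $\infty$ lies in exactly one $k$-block of each of the $u=(v-1)/(k-1)$ parallel classes, any two of these blocks meeting only in $\infty$; hence the $u$ sets $B\setminus\{\infty\}$ partition the remaining $(k-1)u$ points into $u$ groups of size $k-1$, and the blocks avoiding $\infty$ together with the traces of the parallel classes form exactly a \emph{$k$-frame of type $(k-1)^u$} (a $k$-uniform group divisible design on $u$ groups of size $k-1$ whose blocks split into partial parallel classes, each a partition of the complement of one group). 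This correspondence is reversible, so the theorem is equivalent to: for fixed $k$ there is a $u_0(k)$ with a $k$-frame of type $(k-1)^u$ existing whenever $u\ge u_0(k)$ and $u\equiv1\pmod k$ (this last being the frame's own divisibility condition, as $k\mid(k-1)(u-1)$).

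\textbf{Recursive construction of frames.} Such frames are built by Wilson's fundamental construction: take a ``master'' group divisible design, give its points integer weights, replace each block by an ingredient frame or GDD of the corresponding type, and check that partial parallel classes recombine into partial parallel classes. The ingredients needed are transversal designs $TD(k+1,n)$, equivalently $k-1$ mutually orthogonal Latin squares of order $n$; these exist for every prime power $n$ (coordinates over $\mathbb{F}_n$), hence --- by multiplicativity of MOLS and the existence of a prime in every interval $[x,x+x^{.525}]$ for $x>x_0$ (Theorem~\ref{lem:prime gap}) --- for all sufficiently large $n$. As seeds one may use finite geometries: the affine plane $AG(2,q)$ of prime-power order $q$ is a resolvable $(q^2,q,1)$-design, i.e.\ a $k$-frame of type $(k-1)^{k+1}$ for $k=q$, and more generally $AG(d,q)$ gives frames of type $(k-1)^u$ with $u=1+k+\cdots+k^{d-1}$. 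These seeds, together with the fundamental construction and weighting, generate $k$-frames of type $(k-1)^u$ for a rich set of $u\equiv1\pmod k$.

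\textbf{Closure, and the main obstacle.} To upgrade ``a rich set of $u$'' to ``all large $u$'' one invokes Wilson's closure principle: the set of admissible $u$ for which the frame exists is closed under the above composition operations, and a set of integers that satisfies a fixed congruence and is closed under such operations must, past some point, contain \emph{every} admissible integer; this yields the threshold $u_0(k)$ and hence $v_0(k)$. The genuine difficulty is precisely this step fused with the base cases: one must guarantee enough usable ingredient orders and enough small frames in \emph{every} relevant residue class, and then bound the exceptional set \emph{uniformly} in $u$ so that a single threshold suffices rather than a scatter of sporadic gaps --- extracting an explicit value of $v_0(k)$ is hard. This closure/density argument is the technical heart of the Ray-Chaudhuri--Wilson method underlying \cite{Lu}, which is why the statement is quoted here rather than reproved.
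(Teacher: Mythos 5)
This is a theorem imported from the literature: the paper states it with the citation \cite{Lu} and gives no proof of its own, so there is no internal argument to compare yours against. Judged on its own terms, your write-up establishes only the easy half. The necessity of the two congruences is correct and complete, and your reformulation in terms of $k$-frames of type $(k-1)^u$ (delete a point $\infty$; the blocks through $\infty$ become the groups, and the remainder of each parallel class becomes the holey parallel class missing the corresponding group) is the standard, correct equivalence, with the right admissibility condition $u\equiv 1\pmod k$. The seeds you cite (affine planes and, more generally, $AG(d,q)$) are also correctly identified.

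However, the sufficiency direction --- which is the entire content of the theorem --- is not proved. You name the right machinery (transversal designs from MOLS, Wilson's fundamental construction for frames, the closure/density principle) and then explicitly decline to execute the closure step, which you yourself call ``the technical heart.'' That step is not a routine verification: one must exhibit frames in every admissible residue class below some bound, check that the weighting and composition operations preserve the frame structure, and show that the recursion eventually covers \emph{all} sufficiently large admissible $u$ with a single uniform threshold. This occupies essentially the whole of the Ray-Chaudhuri--Wilson/Lu argument. As written, your text is an accurate road map to the known proof rather than a proof; since the manuscript itself only quotes the theorem, that level of detail is appropriate here, but it should not be mistaken for a self-contained demonstration.
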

\begin{theorem}
Let  $0<c<1/2$ be a constant and $m,n$ be some integers satisfying $m=cn$. Then
\begin{equation}\label{eq:cn}
c(1-c^2)n^2+ \Omega(n)\leq \scp(K_n-K_m) \leq  \frac{(1-c)(\lfloor 1/c\rfloor-c)}{\lfloor 1/c\rfloor(\lfloor 1/c\rfloor-1)} n^2+O(n).
\end{equation}
In particular, if $1/c$ is integer, then $\scp(K_n-K_m) \sim c(1-c^2)n^2$.
\end{theorem}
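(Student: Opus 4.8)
The plan is to establish the two inequalities separately: the lower bound is a direct consequence of Theorem~\ref{thm:kn-km}, and the upper bound comes from an explicit clique partition of $K_n-K_m$ built on a resolvable design. For the lower bound I would substitute $m=cn$ into $\scp(K_n-K_m)\ge mn-\frac{m^2(m-1)}{n-1}$ from~\eqref{eq:kn-km} and expand: using $\frac{cn-1}{n-1}=c+\frac{c-1}{n-1}$ and $\frac{n^2}{n-1}=n+1+\frac1{n-1}$ one gets $\frac{m^2(m-1)}{n-1}=c^3n^2+c^2(c-1)n+O(1)$, hence $\scp(K_n-K_m)\ge c(1-c^2)n^2+c^2(1-c)n+O(1)$, which is $c(1-c^2)n^2+\Omega(n)$ since $c^2(1-c)$ is a positive constant for $0<c<1$.

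For the upper bound, set $k=\lfloor 1/c\rfloor$ (so $k\ge2$, as $c<\tfrac12$) and write the vertex set of $K_n-K_m$ as $A\cup B$, where $A$ is the hole, $|A|=m$, and $|B|=n-m$. The construction covers the edges inside $B$ by a resolvable $(v,k,1)$-design on a subset $B'\subseteq B$ of size $v$, using the points of $A$ to \emph{host} parallel classes of this design: a host $a\in A$ of the parallel class $\{P_1,\dots,P_t\}$ (a partition of $B'$ into $k$-sets) contributes the cliques $\{a\}\cup P_1,\dots,\{a\}\cup P_t$, which together cover exactly the edges from $a$ to $B'$ and the edges of $B'$ lying in that class — and since a point of $A$ may host only one class, no edge is covered twice. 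By Theorem~\ref{thm:resol} and the Chinese remainder theorem, for $n$ large there is an integer $v$ with $v\equiv0\pmod k$, $v\equiv1\pmod{k-1}$ and $(n-m)-O(1)\le v\le n-m$ admitting such a design; let $r=(v-1)/(k-1)$ be its number of parallel classes. One now distinguishes two regimes according to whether $1/c$ is an integer. If $1/c=k$ then $r\le\frac{n-m-1}{k-1}=m-\frac1{k-1}<m$, so one hosts all $r$ parallel classes by distinct points of $A$; if $1/c$ is not an integer then $ck<1$ and hence $r\ge m$ for $n$ large, so one hosts $m$ of the parallel classes and keeps the remaining $r-m$ parallel classes as $k$-cliques inside $B'$. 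In both regimes the $O(1)$ points of $B\setminus B'$ and — in the integer regime — the $O(1)$ unhosted points of $A$ are handled by size-$2$ cliques taken in a fixed order so that each edge is covered exactly once, at total cost $O(n)$; adding back the block $A=K_m$ shows the result is a PBD and hence confirms we have a genuine clique partition of $K_n-K_m$.

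It then remains to sum the clique sizes. Denoting by $h=m+O(1)$ the number of hosted parallel classes, the sum over the clique partition of $K_n-K_m$ is $\frac{hv(k+1)}{k}+(r-h)v+O(n)$ (the middle term occurring only when $1/c$ is not an integer). Substituting $v=(1-c)n+O(1)$ and $r=\frac{(1-c)n}{k-1}+O(1)$, so that $r-m=\frac{(1-ck)n}{k-1}+O(1)$, gives in the non-integer regime the leading term $(1-c)\bigl(\frac{c(k+1)}{k}+\frac{1-ck}{k-1}\bigr)n^2$, and the identity $\frac{c(k+1)}{k}+\frac{1-ck}{k-1}=\frac{k-c}{k(k-1)}$ collapses this to $\frac{(1-c)(k-c)}{k(k-1)}n^2$; the same computation in the integer regime yields $c(1-c^2)n^2$, which equals $\frac{(1-c)(k-c)}{k(k-1)}n^2$ when $c=1/k$. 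Thus $\scp(K_n-K_m)\le\frac{(1-c)(k-c)}{k(k-1)}n^2+O(n)$ in all cases, and when $1/c$ is an integer this meets the lower bound, giving $\scp(K_n-K_m)\sim c(1-c^2)n^2$.

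The point that will need the most care is verifying that the unavoidable $O(1)$-sized imprecisions in the parameters — the gap between $v$ and $n-m$ imposed by the divisibility conditions of Theorem~\ref{thm:resol}, together with the few ``defect'' vertices in $A$ or $B$ they produce — are genuinely absorbed into the $O(n)$ error and do not perturb the $n^2$ coefficient. This is routine but must be done explicitly, using that each defect vertex contributes only $O(n)$ to the total sum and that there are only $O(1)$ of them, and that a fixed ordering of the size-$2$ cliques keeps the family a partition rather than a mere cover.
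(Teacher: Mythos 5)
Your proposal is correct and follows essentially the same route as the paper: the lower bound by substituting $m=cn$ into \eqref{eq:kn-km}, and the upper bound by taking a resolvable $(v,k,1)$-design with $k=\lfloor 1/c\rfloor$ and $v\approx n-m$, attaching the $m$ hole-vertices to distinct parallel classes, and simplifying to the coefficient $\frac{(1-c)(k-c)}{k(k-1)}$. The only cosmetic difference is that you round $v$ down to at most $n-m$ and patch the $O(1)$ defect vertices with edges, whereas the paper rounds $v$ up to at most $n-m+k^2$ and deletes the surplus points; both yield the same $O(n)$ error term.
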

\begin{proof}
The lower bound in (\ref{eq:cn}) is obtained from the lower bound in (\ref{eq:kn-km}). For the upper bound, let $k=\lfloor 1/c\rfloor$ and define $v$ as the smallest number greater than or equal to $n-m$ which satisfies the conditions of Theorem~\ref{thm:resol}. Without loss of generality we can assume that $n$ is sufficiently large, i.e. $ n\geq v_0(k) $. Thus, we have $v\leq n-m+k^2$ and by Theorem~\ref{thm:resol}, there exists a $(v,k,1)-$resolvable design. Remove $v-n+m$ points from such a design to obtain a PBD $ (P,\mathcal{B}) $ on $n-m$ points whose blocks are partitioned into  $t=(v-1)/(k-1)$ parallel classes. First, we show that $m\leq t$. Note that
\[m-t=cn-\frac{v-1}{k-1}\leq cn-\frac{(1-c)n-1}{k-1}=\frac{(ck-1)n+1}{k-1}.\]
If $k=2$, then $ck<1$ and $m-t<1$. Also, if $k>2$, then $ck\leq 1$ and thus $m-t\leq 1/(k-1)<1$. Therefore, $m\leq t$.

Now, let $v_1,\ldots, v_m$ be $m$ new points and for every $i$, $1\leq i\leq m$, add point $v_i$ to all blocks of $i$-th parallel class. These blocks form a clique partition $\mathcal{C}$ for $K_n-K_m$, where
\begin{equation*}
\sum_{C\in\mathcal{C}}|C|\leq \sum_{B\in\mathcal{B}}|B|+\frac{v}{k} m= (n-m)\frac{v-1}{k-1}+\frac{mv}{k}.
\end{equation*}
Hence,
\begin{align*}
\sum_{C\in\mathcal{C}}|C|&\leq \left(\frac{(1-c)^2}{k-1}+ \frac{c(1-c)}{k}\right)n^2+O(n)\\
&= \frac{(1-c)(k-c)}{k(k-1)} n^2+O(n).
\end{align*}
\end{proof}  
%
We close the paper by proving that if $ G $ is Cocktail party graph, complement of path or cycle on $ n $ vertices, then $ \scp(G)\sim n\sqrt{n} $.  
Given an  even positive integer $n$, Cocktail party graph $T_n$ is obtained from the complete graph $K_{n}$ by removing a perfect matching. If $n$ is an odd positive integer, then $T_n$ is obtained from $T_{n+1}$ by removing a single vertex. 
In \cite{Wallis87,gregory86} it is proved that if $G$ is Cocktail party graph or complement of a path or a cycle on $n$ vertices, then $n\leq \cp(G)\leq (1+o(1))\, n \log\log n$ and it is conjectured that for such a graph, $\cp(G)\sim n$. 

\begin{theorem}\label{thm:complment of path}
Let $ P_n $ be the path on $ n $ vertices. Then,  $ \scp(\overline{P_n})\sim n^{3/2} $.
\end{theorem}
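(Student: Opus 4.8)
The plan is to establish the two bounds $\scp(\overline{P_n}) \geq (1+o(1))\,n^{3/2}$ and $\scp(\overline{P_n}) \leq (1+o(1))\,n^{3/2}$ separately, mirroring the structure of the proofs for $K_n - K_m$ in the preceding theorems. For the lower bound, I would invoke Lemma~\ref{lem:lower bound K_n-H} with $H = P_n$. Here $\omega(P_n) = 2$ and $\omega(\overline{P_n})$ is the independence number of $P_n$, which is $\lceil n/2 \rceil$. For $n$ large both quantities are comfortably below $n - \frac12(\sqrt n + 1)$, so the hypotheses of the lemma are met and we get $\scp(\overline{P_n}) + \scp(P_n) \geq n(\lfloor \sqrt n \rfloor + 1) - 1$. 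Since $P_n$ has only $n-1$ edges, every clique partition of it is just its edge set, so $\scp(P_n) = 2(n-1) = O(n)$, which is negligible; hence $\scp(\overline{P_n}) \geq (1+o(1))\,n^{3/2}$.

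For the upper bound, the idea is to mimic Lemma~\ref{lem:design}: take a projective plane of order $q$ with $q$ the smallest prime power at least $\sqrt n$ (so $q = (1+o(1))\sqrt n$ by Theorem~\ref{lem:prime gap}), giving a $(q^2+q+1, q+1, 1)$-design. We want to delete points so that the remaining structure, viewed as a clique partition, covers exactly the edges of $K_n$ minus the edges of a Hamiltonian path. Concretely: restrict the design to an appropriate $n$-point subset so the blocks become cliques covering all pairs except those we wish to omit; the pairs that must be \emph{omitted} are the $n-1$ edges of $P_n$. The cleanest route is to first produce a clique partition of $K_n - K_m$ for a suitable small $m$ (via Lemma~\ref{lem:design} or Theorem~\ref{thm:kn--km}) and then handle the path inside the deleted $K_m$ together with the leftover edges: choose $m \approx \sqrt n$, take a projective-plane-based clique partition of $K_n - K_m$ of total size $(1+o(1))\,n^{3/2}$, and then within the remaining complete graph on $m$ vertices, we must cover exactly $\binom{m}{2}$ minus (the edges of $P_n$ that land among those $m$ vertices) — but the $n-1$ path edges are spread across all $n$ vertices, so this needs care.

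The cleaner formulation: a path $P_n$ is a union of two matchings (color the edges with $2$ colors). A $\overline{P_n}$ clique partition can be built by starting from a clique partition of $\overline{M}$ for a perfect matching $M$ — i.e.\ essentially the Cocktail party graph $T_n$ — and then subdividing the at most $n/2$ cliques that are "spoiled" by the second matching's edges, at an extra cost of $O(n)$. So I would first show $\scp(T_n) \leq (1+o(1))\,n^{3/2}$ using the projective plane construction (delete a parallel class's worth of structure to carve out a near-perfect matching as the omitted edges), then argue that passing from $T_n$ to $\overline{P_n}$ changes the cost by only $O(n)$ because $P_n$ differs from a perfect matching by another matching of $\leq n/2$ edges, each of which forces splitting at most one clique into two. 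Combined with the lower bound this gives $\scp(\overline{P_n}) \sim n^{3/2}$.

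The main obstacle is the upper-bound construction: making the projective-plane deletion argument produce \emph{exactly} the complement of a Hamiltonian path (not just the complement of a matching or of $K_m$) while keeping the total clique-size sum at $(1+o(1))\,n^{3/2}$. The reduction "$\overline{P_n}$ costs $O(n)$ more than $T_n$" must be justified carefully — one must check that each edge of the residual matching, when added back, splits a single clique $C$ into $C \setminus \{a\}$ and $C \setminus \{b\}$ (for the edge $ab$), increasing the sum by $|C| - 2$, and that $\sum (|C|-2)$ over these $\leq n/2$ cliques is $O(n)$, which holds because these cliques have bounded overlap and total size $O(n^{3/2})$ spread over $\Theta(\sqrt n)$-sized cliques — a point that needs a short but genuine argument rather than a one-liner.
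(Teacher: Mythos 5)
Your lower bound is exactly the paper's: Lemma~\ref{lem:lower bound K_n-H} applies to $H=P_n$ and $\scp(P_n)=2(n-1)$ is negligible, so $\scp(\overline{P_n})\geq n^{3/2}-O(n)$. The problem is the upper bound, where your reduction to the cocktail party graph has a genuine gap, in fact two. First, replacing a clique $C$ by $C\setminus\{a\}$ and $C\setminus\{b\}$ to kill the pair $ab$ is not a clique \emph{partition}: every pair inside $C\setminus\{a,b\}$ is now covered twice. The cheapest valid repair of a single clique with one forbidden edge is a clique partition of $K_{|C|}-K_2$, which by the paper's own bounds costs exactly $3|C|-5$, i.e.\ an extra $2|C|-5$ per spoiled clique. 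Second, even on your own (invalid) accounting the arithmetic fails: if the $\Theta(n)$ matching edges land in $\Theta(n)$ distinct cliques of size $\Theta(\sqrt n)$ --- which is what happens in any plane-based partition unless you arrange otherwise --- then $\sum(|C|-2)=\Theta(n^{3/2})$, not $O(n)$, so the leading constant is destroyed rather than preserved. (Your parenthetical ``total size $O(n^{3/2})$ \ldots\ hence $O(n)$'' is internally inconsistent.) The base step is also not available as described: a projective plane has no parallel classes, and in an affine plane of order $q$ a parallel class consists of $q$-sets, not of pairs, so you cannot ``carve out a near-perfect matching'' by deleting one; establishing $\scp(T_n)\leq(1+o(1))n^{3/2}$ is not easier than the theorem itself (indeed the paper deduces it \emph{from} $\scp(\overline{P_n})$, not the other way around).

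The idea you are missing is that the forbidden edges must be \emph{localised} inside a vanishing fraction of the blocks, and those few blocks must then be handled \emph{recursively}. The paper takes an affine plane of order $q=(1+o(1))\sqrt n$, trims two parallel classes to view the points as a $d\times e$ grid ($d=\lfloor\sqrt n\rfloor$, $e=\lceil n/d\rceil$), and routes the Hamiltonian path so that all of its edges lie inside the $e$ ``column'' blocks of one parallel class plus the two extreme ``row'' blocks of the other (up to $e-1$ stray connector edges, added back as $2$-cliques). Those $e+2$ blocks are replaced by clique partitions of $\overline{P_d}$ and $\overline{P_e}$ obtained from the induction hypothesis, at a cost of $e\cdot\scp(\overline{P_d})+2\scp(\overline{P_e})=O(n\cdot n^{1/4})=O(n^{5/4})$, which is absorbed into the error term $c\,n^{13/10}$; the untouched $\sim q^2$ blocks of size $\sim q$ contribute the main term $qde\sim n^{3/2}$. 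Without this localisation-plus-recursion, any scheme that repairs the forbidden edges one clique at a time pays $\Theta(\sqrt n)$ per edge and loses the asymptotic constant.
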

\begin{proof}
By Lemma~\ref{lem:lower bound K_n-H}, we have $ \scp(\overline{P_n})\geq n^{3/2}-2n-3$. Now, by induction on $n$, we prove that there exists a constant $c$, such that $\scp(\overline{P_n})\leq n^{3/2}+c\, n^{13/10}$.  The idea is similar to \cite{Wallis87}.\\
Let $ d=\lfloor\sqrt{n}\rfloor $, $ e=\lceil\frac{n}{d}\rceil $ and $ q $ be the smallest prime greater than $ \sqrt{n} $. By Lemma~\ref{lem:prime gap}, $q\leq \sqrt{n}+n^{3/10}$. In an affine plane of order $ q $, choose a parallel class, say $ C_1 $, and delete $ q-d $ blocks in $C_1$. Then,  remove $ q-e $ blocks in a second parallel class, say $ C_2 $. The collection of remaining blocks is a PBD on $de$ points.

Assume that $ a_{ij} $ is the intersection point of block $ i $ of $ C_1 $ and block $ j $ of $ C_2 $ in the remaining PBD. Thus, 
 $C_1=\{\{a_{i1}, a_{i2},\ldots , a_{ie}\} \ : \  1\leq i\leq d\}$ and $C_2=\{\{a_{1j}, a_{2j},\ldots , a_{dj}\} \ : \  1\leq j\leq e \}$.
Now, replace each block in $ C_2 $ by members of a clique partition of a copy of $\overline{P_d }$ on the same vertices. Also, replace each of the blocks $ \{a_{11}, a_{12},\ldots , a_{1e}\} $ and $ \{a_{d1}, a_{d2},\ldots , a_{de}\} $ in $ C_1 $ by members of a clique partition of a copy of $\overline{P_e}$ on the same vertices. In fact, we have replaced $ e+2 $ blocks by some clique partitions of complement of paths and $ q(q+1)-(e+2) $ blocks are left unchanged. It can be seen that the resulting collection, is a partition of all edges of $\overline{P_{de}}$ except $(e-1)$ edges namely $a_{11}a_{12}, a_{d2}a_{d3}, a_{13}a_{14}, a_{d4}a_{d5},\dots$ . Adding these $e-1 $ edges to this collection comprise a clique partition for $\overline{P_{de}}$. Hence,

\begin{align*}
\scp(\overline{P_n})\leq\scp(\overline{P_{de}})&\leq qde -2e+e\scp(\overline{P_d})+2\scp(\overline{P_e})+2(e-1).
\end{align*}
Since $ e\leq d+3 $, $ \scp(\overline{P_e})\leq\scp(\overline{P_d})+6d $. Thus,
\begin{align*}
\scp(\overline{P_n})&\leq qd(d+3)+(d+5)\scp(\overline{P_d})+12d.
\end{align*}
Therefore, by the induction hypothesis, we have
\begin{align*}
\scp(\overline{P_n})&\leq (\sqrt{n}+n^{3/10}) \sqrt{n}(\sqrt{n}+3)+(\sqrt{n}+5)(n^{3/4}+c\, n^{13/20})+12\sqrt{n}\\
&\leq n^{3/2}+ (1+o(1))\, n^{13/10}\\
& \leq n^{3/2}+c\, n^{13/10}.
\end{align*}
\end{proof}
Asymptotic behavior of $\scp(T_n)$ and $\scp(\overline{C_n})$ can be easily determined using $\scp(\overline{P_n})$, as follows.
\begin{cor}
Let $ T_n $ and $ C_n $ be Cocktail party graph and cycle on $ n $ vertices, respectively. Then,  $ \scp(\overline{C_n})\sim n^{3/2} $ and $ \scp(T_n)\sim n^{3/2} $.
\end{cor}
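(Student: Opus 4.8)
The plan is to deduce the corollary from Theorem~\ref{thm:complment of path} by handling the three graphs together, since $\overline{C_n}$ and $T_n$ both differ from $\overline{P_n}$ by only a bounded number of edges at each "level" of the recursive construction, so their $\scp$ values differ by lower-order terms. First I would record the easy lower bounds: by Lemma~\ref{lem:lower bound K_n-H} applied to $H=C_n$ and $H=\overline{T_n}$ (a perfect matching, whose complement has clique number $n/2 < n-\tfrac12(\sqrt n+1)$, while $\omega(C_n)=2$ and $\omega(T_n)=n/2$ are also small enough), we get $\scp(\overline{C_n})+\scp(C_n)\geq n(\lfloor\sqrt n\rfloor+1)-1$ and $\scp(T_n)+\scp(\overline{T_n})\geq n(\lfloor\sqrt n\rfloor+1)-1$; since $\scp(C_n)=2n$ and $\scp(\overline{T_n})=n$ (each is already a clique partition into edges, which is forced up to $o(n^{3/2})$ slack), this gives $\scp(\overline{C_n}),\scp(T_n)\geq (1-o(1))\,n^{3/2}$.

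For the upper bounds, I would observe that $\overline{C_n}$ is obtained from $\overline{P_n}$ by deleting exactly one edge (the edge joining the two endpoints of $P_n$), hence $\scp(\overline{C_n})\leq \scp(\overline{P_n})+ (\text{that edge as a}~K_2)$ after we delete that edge from whatever clique of a minimum $\scp$-partition of $\overline{P_n}$ contains it — more carefully, removing one edge from a graph changes $\scp$ by at most an additive constant, so $|\scp(\overline{C_n})-\scp(\overline{P_n})|\le 2$, and the same argument handles $T_n$: $T_n=\overline{M}$ where $M$ is a perfect matching, and $\overline{P_n}$ is obtained from $T_n$ by deleting $n-2$ edges (the non-matching path edges that survive in the matching complement) — wait, one must be a little careful here, so instead I would mimic the recursive construction of Theorem~\ref{thm:complment of path} verbatim, replacing the copies of $\overline{P_d}$ and $\overline{P_e}$ used there by copies of $\overline{C_d}$, $\overline{C_e}$ or pieces of $T_d$, $T_e$ as appropriate; the bookkeeping of which $O(e)$ edges are left uncovered is the same up to $O(\sqrt n)$ terms, so the identical induction yields $\scp(\overline{C_n})\le n^{3/2}+c\,n^{13/10}$ and $\scp(T_n)\le n^{3/2}+c\,n^{13/10}$.

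Combining the matching lower and upper bounds gives $\scp(\overline{C_n})\sim n^{3/2}$ and $\scp(T_n)\sim n^{3/2}$. The main obstacle I anticipate is making the "deleting or adding a bounded number of edges changes $\scp$ by $O(1)$ (or $O(\sqrt n)$)" step fully rigorous in both directions: the upper-bound direction is routine (add the missing edges as $K_2$'s to an existing clique partition, or split off a vertex pair from a clique), but the lower-bound direction requires that a minimum clique partition of $\overline{P_n}$ cannot be drastically cheaper than one of $\overline{C_n}$ — this is where I would instead lean on the clean inequality $\scp(G)\le \scp(G')+2|E(G)\triangle E(G')|$ valid whenever $G\subseteq G'$ and symmetrically, which reduces everything to the already-established asymptotics for $\overline{P_n}$ and the trivial estimates $\scp(C_n)=2n$, $\scp(\overline{T_n})=n$. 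Once that lemma-level fact is in place the corollary is immediate.
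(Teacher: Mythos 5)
Your lower bounds are fine and match the paper: Lemma~\ref{lem:lower bound K_n-H} applied with $H=C_n$ and with $H$ a perfect matching, together with $\scp(C_n)=2n$ and $\scp(M)=n$, gives $\scp(\overline{C_n}),\scp(T_n)\geq n^{3/2}-O(n)$. The problem is in your upper bound for $\overline{C_n}$. The ``clean inequality'' you lean on, $\scp(G)\leq\scp(G')+2|E(G)\triangle E(G')|$ for $G\subseteq G'$, is false in the deletion direction: take $G'=K_n$ and $G=K_n-e$; then $\scp(K_n)=n$ but $\scp(K_n-e)=3n-5$ (this follows from the paper's Corollary~2.3, since $\scp(\overline{K_n-e})=2$), so deleting a single edge can increase $\scp$ by $\Theta(n)$, not by $2$. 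Likewise your claim that ``removing one edge changes $\scp$ by at most an additive constant'' is unjustified: the edge $v_1v_n$ you delete from $\overline{P_n}$ to form $\overline{C_n}$ may sit inside a large clique of the optimal partition, and breaking that clique costs $\Theta(|C|)$, not $O(1)$. (This particular loss is only $O(n)$, so the asymptotic could in principle be rescued by explicitly splitting the offending clique into a clique minus a vertex plus $K_2$'s, but you neither do this nor note that your stated lemma fails.) Your fallback of re-running the whole induction of Theorem~\ref{thm:complment of path} for $\overline{C_d}$ and $T_d$ is not carried out and is not ``verbatim'' -- the boundary bookkeeping of uncovered edges genuinely depends on which graph you are partitioning.

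The paper avoids all of this by orienting both comparisons in the legitimate direction, where one only \emph{adds} edges as $K_2$'s to an existing clique partition. For the cycle it compares $\overline{C_{n+1}}$ with $\overline{P_n}$ obtained by deleting a vertex $v$: then $\overline{C_{n+1}}$ is $\overline{P_n}$ plus the $n-2$ edges at $v$, so $\scp(\overline{C_{n+1}})\leq\scp(\overline{P_n})+2(n-2)$. For the Cocktail party graph it uses $E(\overline{P_n})\subseteq E(T_n)$ (choose the deleted matching inside the path), so $\scp(T_n)\leq\scp(\overline{P_n})+n$. Your treatment of $T_n$ is in fact this valid direction once untangled, but for $\overline{C_n}$ you chose the wrong comparison and propped it up with a false lemma, so as written the upper bound for $\overline{C_n}$ has a genuine gap.
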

\begin{proof}
By Lemma \ref{lem:lower bound K_n-H}, $ \scp(\overline{C_n})\geq n^{3/2}-2n-1 $ and $ \scp(T_n)\geq n^{3/2}-n-1 $.\\
Note that $\overline{P_n}$ is obtained from $\overline{C_{n+1}}$ by removing an arbitrary vertex $v$. Adding $n-2$ edges incident with $v$ to any clique partition of $ \overline{P_{n}} $ forms a clique partition for $ \overline{C_{n+1}} $. Therefore, $ \scp(\overline{C_{n+1}})\leq\scp(\overline{P_n})+2(n-1) $. Also, adding at most $n/2$ edges to any clique partition for $\overline{P_n}$ forms a clique partition for $T_n$. Thus, $ \scp(T_n)\leq\scp(\overline{P_n})+2\frac{n}{2} $. Hence, by Theorem~\ref{thm:complment of path}, $ \scp(\overline{C_n}), \scp(T_n)\leq (1+o(1))\, n^{3/2}$. 
\end{proof}

\end{document}